\newtheorem{proposition}{Proposition}[section]
\newtheorem{remark}{Remark}[section]
\journal{EURO Journal on Computational Optimization}
\begin{document}

\begin{frontmatter}

\title{Data-Driven Stochastic VRP: Integration of Forecast Duration into Optimization for Utility Workforce Management}

\author[univ]{Matteo Garbelli\corref{cor1}}
\ead{matteo.garbelli@univr.it}

\cortext[cor1]{Corresponding author}

\affiliation[univ]{organization={Department of Computer Science},
                   addressline={Strada le Grazie, 15},
                   city={Verona},
                   postcode={37134},
                   country={Italy}}

\begin{abstract}
This paper investigates the integration of machine learning forecasts of intervention durations into a stochastic variant of the Capacitated Vehicle Routing Problem with Time Windows (CVRPTW). In particular, we exploit tree-based gradient boosting (XGBoost) trained on eight years of gas meter maintenance data to produce point predictions and uncertainty estimates, which then drive a multi-objective evolutionary optimization routine. The methodology addresses uncertainty through sub-Gaussian concentration bounds for route-level risk buffers and explicitly accounts for competing operational KPIs through a multi-objective formulation. Empirical analysis of prediction residuals validates the sub-Gaussian assumption underlying the risk model. From an empirical point of view, our results report improvements around 20-25\% in operator utilization and completion rates compared with plans computed using default durations. The integration of uncertainty quantification and risk-aware optimization provides a practical framework for handling stochastic service durations in real-world routing applications.
\end{abstract}




\begin{keyword}
Stochastic VRP \sep
Machine Learning \sep
XGBoost \sep
Sub-Gaussian Concentration \sep
Multi-Objective Optimization \sep
Evolutionary Algorithms 
\end{keyword}

\end{frontmatter}

\section{Introduction}

The Vehicle Routing Problem (VRP), first formalized in \cite{danzig}, addresses the efficient assignment of vehicles to serve a predetermined set of customers or locations and constitutes a fundamental optimization challenge in logistics, transportation, and utility services. Classical VRP methods assume complete information of travel times or fixed service durations and deterministic customer demands. While being mathematically tractable, this deterministic perspective poorly represents real operations and solutions often fail when deployed, becoming infeasible or highly suboptimal. In practice, traffic conditions fluctuate, service durations vary across different contexts, and customer demands change.

The gap between practice and theory has driven the development of a new class of problems, specifically the Stochastic VRPs, which explicitly model uncertainty. Recently, the inclusion of data-driven frameworks, such as predictive algorithms, can improve the performance and refine the efficiency of the route-generation process for optimization tasks.


This work addresses the challenge of incorporating stochastic intervention durations into predictive models for VRP optimization. We evaluate whether machine learning forecasts improve routing solutions compared to traditional fixed-time estimates used in practice.

Our study has three objectives: (1) develop a framework integrating ML forecasts into SVRP solvers; (2) evaluate multiple prediction models trained over eight years of operational data; (3) identify which forecast-optimization combinations perform best for practical deployment.

\paragraph{Organization of the paper}

The remainder of this paper is organized as follows. Section \ref{sec:svrp} provides an overview of Stochastic VRP and its variants with a glance towards optimization under uncertainty. Section \ref{sec:opt_methods} presents the chosen optimization algorithm to solve VRPs.
Section \ref{sec:predictive} discusses predictive methods for informed optimization. Section \ref{sec:experimental} presents the experimental framework developed for analyzing the performance of the forecast with respect to the actual baseline used for production. Section \ref{sec:vrp} focuses on producing representative simulations for multiple instances of VRPs, showing the potential and the efficiency of forecast integration. Finally, Section \ref{sec:conclusion} concludes the paper and outlines directions for future research.

\bigskip

\section{Literature Review}
\label{sec:svrp}

This section reviews best practices for stochastic VRPs, analyzing both classical approaches and the integration of machine learning methods. Traditionally, two principal methodological frameworks have emerged for effectively incorporating travel time uncertainty into VRP models with time windows: stochastic programming and robust optimisation.
The stochastic programming framework operates under the assumption that complete probability distributions of travel times are available. This methodology explicitly incorporates these distributions into the optimization model, typically through scenario-based formulations or chance constraints. Under this paradigm, travel times are modeled as random variables drawn from predetermined probability distributions, allowing for explicit consideration of uncertainty in the decision-making process. Classical surveys, such as \cite{Sahinidis2004, Shapiro2008}, offer a comprehensive overview of this approach.

Conversely, the robust optimization approach does not require the knowledge of specific probability distributions; rather, it is built over the construction of uncertainty sets. This methodology characterizes travel time variability through bounded intervals rather than complete distributional information. The objective is to develop solutions that maintain feasibility and near-optimal performance across all possible realizations within the specified uncertainty set.
We refer to \cite{delage2010distributionally} for a complete survey on this strand. We also mention another complementary approach, dealing with
contextual optimization \cite{CSO}, that uses local context variables (e.g., traffic information over some edges) to condition routing decisions.

The remainder of the section presents a thorough review of existing empirical approaches, mostly building on heuristic methods to model stochastic demand, time-dependent travel uncertainties, and multi-objective optimization, establishing the current state-of-the-art in SVRP research.

The comprehensive analysis by Oyola et al. \cite{Oyola1, Oyola2} demonstrates that uncertainty in VRP manifests across multiple operational dimensions, primarily affecting demand patterns, travel times, and service durations. The evolution of the deterministic capacitated VRP has generated multiple specialized variants designed to address specific operational constraints and real-world complexities. These variants include: 
\begin{enumerate}

\item the Capacitated Vehicle Routing Problem with Stochastic Demand (CVRPSD);

\item the Vehicle Routing Problem with Time Windows (VRPTW) incorporating temporal constraints for customer service;

\item the VRP with Pickups and Deliveries (VRPPD) addressing simultaneous collection and distribution requirements;

\item the VRP with Backhauling (VRPB), optimizing routes that combine delivery and subsequent collection operations. 
\end{enumerate}
Among these variants, the CVRPSD represents the most extensively investigated stochastic formulation, characterized by customer demands that follow probabilistic distributions and are revealed only after the initial route planning is completed.

The treatment of correlated stochastic travel times has received significant attention through the work of Rajabi-Bahaabadi et al. \cite{bahaabadi-et-al}, who address the reliable vehicle routing problem in stochastic networks with correlated travel times. A genetic evolutionary algorithm is proposed for modeling stochastic travel times with soft time windows and correlated stochastic arc travel times, which is simultaneously coupled with an ant colony optimization algorithm for solving the resulting stochastic programming problem.  The methodology derives closed-form expressions for calculating expected earliness and tardiness penalties while applying the Akaike Information Criterion to identify optimal probability distributions for travel time modeling. Shifted log-normal distributions demonstrate superior performance.

In the paradigm of contextual optimization, the VRPTW variant has been studied in the work presented in \cite{Serrano-et-al}. Their work investigates VRPTW under stochastic travel times where decision-makers observe contextual information represented as feature variables prior to making routing decisions. The approach integrates contextual feature variables directly into the optimization algorithm, minimizing total transportation costs and expected late arrival penalties conditioned on observed features. This methodology differs from previous approaches, e.g., \cite{Bomboi-et-al} and \cite{bahaabadi-et-al}, since a fixed known travel time distribution is assumed. Instead, \cite{Serrano-et-al} addresses scenarios where the joint distribution of travel times and features remains unknown, developing novel data-driven prescriptive models that utilize historical data to provide approximate solutions. Their data-driven prescriptive model connects to decision-makers with access to historical travel times for recent periods, enabling direct penalty prediction rather than requiring estimation of travel time distributions followed by expected penalty approximation.

Feasibility assessment for stochastic VRPTW has been advanced in \cite{Bomboi-et-al}, where efficient feasibility assessment methods for stochastic VRPTW with correlated and time-dependent travel times are developed. Their contributions include an approximate feasibility verification approach for single-chance constrained routing problems that concurrently considers travel time correlations and time dependencies. The methodology adopts a jointly normal distribution assumption for travel time modeling and establishes infeasibility criteria based on excessive time window violation probabilities.

For an environmental perspective, we refer \cite{Gulmez-et-al}, who extend stochastic formulations to include environmental factors through the Green VRP. Their approach implements flexible time windows to enhance both routing efficiency and environmental performance, develops a preference-based system allowing customers to rank alternative time windows, and formulates a multi-objective optimization model that balances operational costs, fossil fuel consumption, and customer satisfaction. The research creates realistic benchmark problems for performance evaluation and conducts comparative analysis of four evolutionary algorithm solvers (NSGA-II, NSGA-III, MOEA/D, and SMS-EMOA) for Pareto front approximation.

\bigskip

In practice, however, neither the stochastic programming nor the robust optimization paradigm provides a straightforward, scalable deployment for production systems. On the one hand, the stochastic approach demands high-fidelity probability distributions \cite{CSO} that are rarely available in real time. Conversely, the robust approach can become overly conservative when uncertainty sets are widened to cover edge cases, leading to infeasible routes or excessive buffer times.

To circumvent these limitations, a new class of data-driven and machine-learning approaches has been developed for directly solving or integrating the VRP.  As clearly illustrated in \cite{Bai2021, Bogyrbayeva2024,Shahbazian2024}, recent efforts have explored how learning paradigms—ranging from supervised and reinforcement learning to hybrid neuro-heuristic frameworks - can replace or augment traditional optimization pipelines for vehicle routing problems.
\cite{Bogyrbayeva2024} provides a detailed classification that differentiates between end-to-end learning, hybrid learning combined with heuristics, single-agent and multi-agent formulations. It shows that ML algorithms and architectures can produce near-optimal solutions while operating significantly faster.
\cite{Shahbazian2024} extends this picture with an application-oriented survey, showing that attention-based networks and graph neural encoders now outperform classical meta-heuristics on large-scale, dynamic VRP variants: modern ML models can reliably capture stochastic travel times, forecast customer demand, learn adaptive routing policies, and plug directly into optimizers. Also \cite{Bai2021} offers a practitioner-oriented review that maps out how analytics and ML can be woven into every stage of a VRP workflow: stochastic-demand forecasting, parameter estimation, model selection, decomposition guidance, and neighbourhood search.

A potential strategy is to develop Reinforcement Learning strategies that learn directly from historical operational data without requiring explicit probability distributions. Deep Reinforcement Learning (DRL) approaches \cite{Nazari2018, Raza2022, Pan2023} have demonstrated particular promise in addressing stochastic VRP via interaction with simulated environments. These methods formulate routing decisions as a Markov Decision Process, enabling dynamic response to uncertain conditions such as stochastic travel times, service times, and customer demands. An interesting approach based on attention-driven transformers that capture circularity and symmetry is proposed \cite{Guan2025}. These methods effectively manage time uncertainty, also in a framework where correlations among duration times are present \cite{Iklassov2023}. Correlated stochastic demands are also handled in \cite{Florio2023}, which presents a Bayesian learning framework integrated with an elementary branch-price-and-cut algorithm. 

The other possible framework deals with the integration of forecast algorithms. Rather than modeling uncertainty through probability distributions, these approaches utilize machine learning models to directly predict uncertain parameters from historical data. \cite{Chang2024} develops a predict-then-optimize framework that integrates graph neural networks with combinatorial optimization for vehicle routing under uncertainty.

Hybrid methodologies that combine forecasting with optimization have shown particular effectiveness for SVRPs with stochastic service times. For example, \cite{Rezvanian2025} develops a Grouping Evolution Strategy (GES) algorithm. The proposed GES algorithm effectively handles the problem's complexity by integrating specialized solution representation, mutation operators, and local search heuristics.

As already emphasized, these data-driven paradigms offer significant advantages over traditional approaches: they eliminate the need for explicit probability distributions, adapt to real-time conditions through continuous learning, and leverage historical patterns to inform routing decisions. As noted in the comprehensive review \cite{Soeffker2022}, ML methods are increasingly bridging the gap between theoretical optimization models and practical implementation in evolving urban logistics environments.

\medskip

Hence, in the current paper, we adopt this perspective and we develop a data-driven workflow that trains an XGBoost model on historical service-duration records spanning several years to predict the actual time required for each planned activity. The resulting point forecasts, combined with calibrated prediction intervals, are fed directly into the VRP solver.

\section{Solving the VRP via Evolutionary Algorithm}
\label{sec:opt_methods}

The Vehicle Routing Problem can be formulated on a graph $G = (V, A)$, where $V$ represents nodes corresponding to activities and $A$ represents feasible connections between these activities.

\begin{table}[H]
\centering
\caption{Notation.}
\label{tab:notation}
\begin{tabular}{ll}
\toprule
Symbol & Meaning \\
\midrule
$G=(V,A)$ & Directed graph (depot $0$, customers $N$) \\
$K$ & Set of vehicles, $|K|=m$ \\
$c_{ij},\, t_{ij}$ & Travel cost/time (min) from $i$ to $j$ \\
$[a_i,b_i]$ & Time window at customer $i$ \\
$H_k$ & Max duration (shift length, min) for vehicle $k$ \\
$S_i$ & Random service duration (min) at $i$ \\
$\mu_i$ & Predicted service duration (point forecast, min) \\
$\sigma_i^2$ & Residual proxy variance (sub-Gaussian) \\
$U_i$ & One-sided conformal upper width at level $1-\tilde{\alpha}$ \\
$x_{ijk}$ & Binary: vehicle $k$ uses arc $(i,j)$ \\
$T_{ik}$ & Planned start time (min) of service at $i$ by $k$ \\
$\delta_i$ & Tardiness slack (min) for soft time windows \\
\bottomrule
\end{tabular}
\end{table}

  \subsection{The Capacitated VRP with uncertain activity durations}
  \label{cvprtw}

Let $G=(V,A)$ be a directed graph, where the vertex set $V=\{0\} \cup N \cup \{n+1\}$ comprises the starting depot $0$, the ending depot $n+1$, and the set of customers $N=\{1, \dots, n\}$. The fleet consists of vehicles $K=\{1, \dots, m\}$. We assume deterministic travel times $t_{ij} \ge 0$ (in minutes) and travel costs $c_{ij} \ge 0$. Furthermore, each customer $i$ is associated with a time window $[a_i, b_i]$, and each vehicle $k$ is constrained by a maximum shift duration $H_k$.

Service duration at $i$ is random: $S_i=\mu_i+\varepsilon_i$, where $\mu_i=\mu(x_i)$ is the ML prediction from features $x_i$, and $\varepsilon_i$ is zero-mean noise. A plan consists of binary routing variables $x_{ijk}\in\{0,1\}$ and continuous start times $T_{ik}\ge0$ (min).

The stochastic CVRPTW with chance constraints is formulated as:

\begin{align}
\min\ & \sum_{k\in K}\sum_{i,j\in V} c_{ij} x_{ijk}
      + \lambda \sum_{i\in N} \delta_i \label{eq:objective_stoch}\\
\text{s.t. }
& \sum_{j\in V} x_{0jk}=1,\ \sum_{i\in V} x_{i,n+1,k}=1 && \forall k\in K \label{eq:depot}\\
& \sum_{j\in V} x_{jik}=\sum_{j\in V} x_{ijk} && \forall i\in N,\,\forall k\in K \label{eq:flow}\\
& \sum_{k\in K}\sum_{j\in V} x_{ijk}=1 && \forall i\in N \label{eq:visit}\\
& T_{jk} \ge T_{ik} + \mu_i + t_{ij} - M(1-x_{ijk}) && \forall i,j\in V,\ \forall k\in K \label{eq:time_prop}\\
& a_i \le T_{ik} \le b_i + \delta_i,\ \delta_i\ge0 && \forall i\in N,\ \forall k\in K \label{eq:time_window}\\
& \mathbb{P}\!\left\{T_{n+1,k}-T_{0k} \le H_k\right\} \ge 1-\alpha_k && \forall k\in K \label{eq:chance}\\
& x_{ijk}\in\{0,1\},\ T_{ik}\ge0. \label{eq:binary}
\end{align}

The objective function (\ref{eq:objective_stoch}) minimizes travel costs $c_{ij}$ and penalizes tardiness $\delta_i$ via weight $\lambda$. Constraint (\ref{eq:depot}) ensures each vehicle leaves and returns to the depot. Constraint (\ref{eq:flow}) maintains flow consistency. Constraint (\ref{eq:visit}) ensures each customer is visited exactly once. Constraint (\ref{eq:time_prop}) propagates time using big-$M$ formulation, where $M$ is a sufficiently large constant. Constraint (\ref{eq:time_window}) enforces time windows with soft tardiness slack $\delta_i$. Critically, constraint (\ref{eq:chance}) is a \emph{chance constraint} ensuring that route duration stays within shift length $H_k$ with probability at least $1-\alpha_k$. The decision variables $x_{ijk}$ and $T_{ik}$ are deterministic; the challenge lies in satisfying the chance constraint (\ref{eq:chance}), which depends on the sum of random service durations $S_i$ along each route. To handle this, we need a tractable way to model the uncertainty from the prediction errors, $\varepsilon_i$.

We model the prediction residuals $\varepsilon_i$ as independent sub-Gaussian random variables. This assumption is a powerful generalization of the Gaussian distribution, providing strong guarantees for the sum of variables, even when their individual distributions are not perfectly normal. Its practical importance lies in the fact that it permits controlling the risk of an entire route failing to meet its time constraint without needing to know the exact distribution of each service time. While the underlying true duration distributions show some skewness (see Figure~\ref{fig:distributions}), the sub-Gaussian assumption on the model residuals is reasonable, as we will demonstrate in Section~\ref{sec:predictive} (see Figure~\ref{fig:error_dist}).

Assuming the residuals $\varepsilon_i$ are independent and sub-Gaussian with proxy variance $\sigma_i^2$ (estimated from model residuals), their sum concentrates tightly around its mean. For a route $R$ and any $u>0$, we have the concentration inequality:
\begin{equation}
\mathbb{P}\!\left\{\sum_{i\in R}\varepsilon_i \ge u\right\} \le \exp\!\left(-\frac{u^2}{2\sum_{i\in R}\sigma_i^2}\right).
\end{equation}
Setting the RHS to $\alpha$ yields a route-level buffer:
\begin{equation}\label{eq:sg-buffer}
\Delta_\alpha(R)\ :=\ \sqrt{2\left(\sum_{i\in R}\sigma_i^2\right)\log(1/\alpha)}.
\end{equation}

We can then state a sufficient condition for chance feasibility:

\begin{proposition}[Route-level chance feasibility via sub-Gaussian buffer]
\label{prop3.1}
Let $R_k$ be the sequence of customers on vehicle $k$'s route and suppose $S_i=\mu_i+\varepsilon_i$ with independent sub-Gaussian $\varepsilon_i$ having proxy variances $\sigma_i^2$. If
\[
\sum_{i\in R_k}\mu_i + \sum_{(i,j)\in R_k} t_{ij}
+ \sqrt{2\log(1/\alpha_k)\sum_{i\in R_k}\sigma_i^2}
\ \le\ H_k,
\]
then $\mathbb{P}\{T_{n+1,k}-T_{0k} \le H_k\}\ge 1-\alpha_k$.
\end{proposition}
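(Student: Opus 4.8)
The plan is to reduce route-level chance feasibility to a single application of the sub-Gaussian concentration inequality that precedes~(\ref{eq:sg-buffer}), exploiting the fact that the buffer~(\ref{eq:sg-buffer}) was calibrated precisely so that this bound equals $\alpha$. First I would express the realized route duration of vehicle $k$ in terms of the random service times. Fix the customer sequence $R_k$ and set $D_k := \sum_{(i,j)\in R_k} t_{ij} + \sum_{i\in R_k}\mu_i$, the deterministic travel-plus-forecast length of the route. Adopting the model's convention that the shift cap $H_k$ measures the active route duration (travel time plus realized service time, with no forced idle waiting counted toward $H_k$), one has
\[
T_{n+1,k}-T_{0k}\ \le\ \sum_{(i,j)\in R_k} t_{ij} + \sum_{i\in R_k} S_i\ =\ D_k + \sum_{i\in R_k}\varepsilon_i .
\]
Hence the failure event satisfies the inclusion $\{T_{n+1,k}-T_{0k}>H_k\}\subseteq\{\sum_{i\in R_k}\varepsilon_i \ge H_k - D_k\}$, and it suffices to bound the probability of the latter.

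Next I would feed the hypothesis into the concentration bound. The assumption is exactly $D_k + \Delta_{\alpha_k}(R_k)\le H_k$, i.e. $u := H_k - D_k \ \ge\ \Delta_{\alpha_k}(R_k) = \sqrt{2\log(1/\alpha_k)\,\Sigma_k}\ \ge\ 0$, where $\Sigma_k := \sum_{i\in R_k}\sigma_i^2$. If $\Sigma_k = 0$ then every $\varepsilon_i$ is almost surely zero and the claim is immediate; otherwise $u\ge\Delta_{\alpha_k}(R_k)>0$, so the concentration inequality applies to the route $R_k$ with this $u$, and combining it with the monotonicity of $t\mapsto e^{-t^2/(2\Sigma_k)}$ on $t\ge0$ and the definition~(\ref{eq:sg-buffer}) of the buffer yields
\[
\mathbb{P}\!\left\{\sum_{i\in R_k}\varepsilon_i \ge u\right\}\ \le\ \exp\!\left(-\frac{u^2}{2\Sigma_k}\right)\ \le\ \exp\!\left(-\frac{\Delta_{\alpha_k}(R_k)^2}{2\Sigma_k}\right)\ =\ e^{-\log(1/\alpha_k)}\ =\ \alpha_k .
\]

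Chaining the two displays gives $\mathbb{P}\{T_{n+1,k}-T_{0k}>H_k\}\le\alpha_k$, and passing to the complement proves $\mathbb{P}\{T_{n+1,k}-T_{0k}\le H_k\}\ge 1-\alpha_k$. I expect the only genuinely substantive step to be the first one: justifying that the realized completion time is dominated by $D_k + \sum_{i\in R_k}\varepsilon_i$, i.e. that time-window-induced idle waiting does not push the route beyond this quantity (in the variant where such waiting is charged against $H_k$, one would need to carry along the planned waiting time, which complicates the bookkeeping but not the probabilistic core). Once that modeling step is fixed, the remainder is a mechanical substitution of $u = H_k - D_k$ into the already-established tail inequality, with the buffer~(\ref{eq:sg-buffer}) engineered so the tail probability lands exactly on $\alpha_k$; independence of the residuals and the sub-Gaussian proxy-variance assumption are used only through that one inequality.
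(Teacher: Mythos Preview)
Your proposal is correct and follows the same route as the paper's proof: decompose the route duration into its deterministic part $D_k$ and the stochastic sum $\sum_{i\in R_k}\varepsilon_i$, then apply the sub-Gaussian tail bound with $u=H_k-D_k\ge\Delta_{\alpha_k}(R_k)$. You are simply more explicit than the paper about the monotonicity step, the degenerate case $\Sigma_k=0$, and the modeling caveat regarding idle waiting; the paper's own argument glosses over these and directly identifies the route duration with $\sum_{i\in R_k}(\mu_i+\varepsilon_i)+\sum_{(i,j)\in R_k}t_{ij}$.
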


\begin{proof}
    By sub-Gaussian concentration, the sum of independent sub-Gaussian random variables $\sum_{i\in R_k}\varepsilon_i$ satisfies the tail bound above. The route duration is $\sum_{i\in R_k}(\mu_i+\varepsilon_i) + \sum_{(i,j)\in R_k} t_{ij}$. The deterministic part is $\sum_{i\in R_k}\mu_i + \sum_{(i,j)\in R_k} t_{ij}$. Adding the buffer $\Delta_{\alpha_k}(R_k)$ ensures that with probability at least $1-\alpha_k$, the stochastic deviations $\sum_{i\in R_k}\varepsilon_i$ do not exceed the buffer, hence the total route duration stays within $H_k$. \hfill$\square$
\end{proof}


\begin{remark}
As an alternative approach, one can use conformal prediction intervals. Suppose one-sided $(1-\tilde{\alpha})$ conformal upper bounds $U_i$ satisfy $\mathbb{P}\{S_i \le \mu_i + U_i\} \ge 1-\tilde{\alpha}$ for all $i$. Using Bonferroni union bound with $\tilde{\alpha}=\alpha_k/|R_k|$, we obtain:
\begin{equation}
\mathbb{P}\left\{\sum_{i\in R_k} S_i \le \sum_{i\in R_k} (\mu_i + U_i) \right\}
\ \ge\ 1-\alpha_k.
\end{equation}
This provides distribution-free coverage guarantees without requiring sub-Gaussian assumptions, albeit at the cost of potentially wider intervals due to the Bonferroni correction. While this provides a robust distribution-free guarantee, the union bound can be overly conservative for routes with many activities. For practical planning systems, integrating point forecasts with a single, aggregate risk buffer, as enabled by the sub-Gaussian approach, is often more tractable. Therefore, we adopt the latter for our main analysis.
\end{remark}

\subsection{The optimization algorithm}

The evolutionary approach addresses the multi-objective nature of the Stochastic VRP with stochastic durations by simultaneously optimizing conflicting objectives. Real-world routing operations involve competing KPIs: minimizing travel costs, reducing overtime, meeting time windows, and maximizing service coverage.

We formulate the CPVRTW introduced in \ref{cvprtw} as a multi-objective optimization problem where the objective vector is:
\begin{equation}\label{eq:multi_obj}
\min\left( \underbrace{\sum_{k}\sum_{i,j} c_{ij}x_{ijk}}_{\text{travel cost}},\ \underbrace{\sum_i \delta_i}_{\text{time-window violations}},\ \underbrace{\sum_k \mathrm{Overtime}_k}_{\text{overtime}},\ \underbrace{-\#\text{served}}_{\text{maximize coverage}} \right).
\end{equation}
Here $\mathrm{Overtime}_k$ represents the amount of time vehicle $k$ exceeds shift length $H_k$, computed as $\max(0, T_{n+1,k}-T_{0k}-H_k)$. The negative sign on served tasks converts maximization to minimization. Rather than scalarizing via fixed weights, we employ a Multi-Objective Evolutionary Algorithm (MOEA), specifically NSGA-III \cite{nsga3}, to explore the Pareto front and provide decision-makers with diverse trade-off solutions.

In a more general form, the multi-objective problem is:
  \begin{align}
  \min_{\mathbf{x} \in \Omega} \mathbf{F}(\mathbf{x}) = (f_1(\mathbf{x}), f_2(\mathbf{x}), \ldots, f_m(\mathbf{x}))
  \end{align}
  where $\mathbf{x}$ represents a routing solution, $\Omega$ is the feasible solution space, and $\mathbf{F}(\mathbf{x})$ is the vector of $m$ objective
  functions as specified in Eq.~(\ref{eq:multi_obj}).

  The algorithm maintains a population $P_t$ of $\mu$ individuals at generation $t$, where each individual represents a complete routing solution.
  Solutions are ranked using Pareto dominance: solution $\mathbf{x}_1$ dominates $\mathbf{x}_2$ (denoted $\mathbf{x}_1 \prec \mathbf{x}_2$) if:
  \begin{align}
  \forall i \in \{1,2,\ldots,m\}: f_i(\mathbf{x}_1) \leq f_i(\mathbf{x}_2) \quad \text{and} \quad \exists j \in \{1,2,\ldots,m\}: f_j(\mathbf{x}_1) <
  f_j(\mathbf{x}_2)
  \end{align}
  fronts contain progressively dominated solutions.

  Structured reference points in $\mathbf{R} = \{\mathbf{r}_1, \mathbf{r}_2, \ldots, \mathbf{r}_H\}$ have been utilized by the algorithm for the diversity of solutions in the objective space. These reference points are adaptively updated based on the current population distribution. For each reference point $\mathbf{r}_i$, the algorithm computes the perpendicular distance $d(\mathbf{x}, \mathbf{r}_i)$ from each solution $\mathbf{x}$, to create associations.

  The adaptation mechanism generates additional reference points around existing ones using:
  \begin{align}
  \mathbf{r}_{new} = \mathbf{r}_i + \Delta \mathbf{r}
  \end{align}
  where $\Delta \mathbf{r}$ represents adaptive perturbations based on population density in different objective space regions.

  Parent selection employs tournament selection with tournament size $\tau$, selecting parents $\mathcal{P}$ from population $P_t$. Crossover operations
  generate offspring $\mathcal{O}$ with probability $p_c$, while mutation operators introduce genetic diversity with probability $p_m$. Each offspring is
  evaluated against all objective functions, accounting for stochastic parameter realisations through Monte Carlo sampling or scenario-based approaches. We present the algorithm \ref{alg:moea-svrp}, which is based on \cite{nsga3, nsga3part2}.

 \begin{algorithm}[H]
  \caption{Forecast-aware MOEA for Stochastic CVRPTW}
  \label{alg:moea-svrp}
  \begin{algorithmic}[1]
  \State \textbf{Input:} Instance $(G,t,c,[a,b],H)$; ML model $\widehat{\mu}$ for predictions $\mu_i$; uncertainty proxy (per-stop $\sigma_i$ or conformal $U_i$); MOEA parameters; risk level $\alpha$
  \State \textbf{Initialize} population $P_0$ by greedy time-window insertion using predicted durations $\mu_i$
  \State $t \leftarrow 0$
  \While{termination criteria not satisfied}
      \State $t \leftarrow t + 1$
      \State \textbf{Variation:} Apply crossover \& mutation to $P_t$ preserving time-window feasibility when possible
      \State \textbf{Repair:} Local search (relocate/swap/2-opt) on offspring $\mathcal{O}$ until no improving move
      \State \textbf{Risk buffer:} For each route $R$ in $\mathcal{O}$, compute buffer $\Delta_\alpha(R)$ via Eq.~(\ref{eq:sg-buffer}) or conformal method
      \State \textbf{Feasibility check:} Penalize routes if $\sum_{i\in R}\mu_i+\sum_{(i,j)\in R} t_{ij}+\Delta_\alpha(R)>H$
      \State \textbf{Evaluate} multi-objective vector (Eq.~\ref{eq:multi_obj}): (travel, tardiness, overtime, $-$served)
      \State \textbf{Non-dominated Sorting:} Organize $P_t \cup \mathcal{O}$ into Pareto fronts $F_1, F_2, \ldots$
      \State \textbf{Select} $\mu$ individuals for $P_{t+1}$ via niching and crowding distance (NSGA-III)
      \State Update convergence and diversity metrics
  \EndWhile
  \State \textbf{return} Pareto set $P_G$; decision-maker chooses policy via KPI preferences
  \end{algorithmic}
  \end{algorithm}

The EA-based approach is particularly advantageous for stochastic VRP because it naturally maintains a diverse set of solutions, providing decision-makers with alternatives that represent different trade-offs between cost efficiency and robustness.
Moreover, it can easily incorporate complex objective functions and constraints without requiring mathematical reformulation and is less susceptible to getting trapped in local optima, which is especially important in stochastic environments where the fitness landscape can be highly non-convex and noisy.

\section{Predictive algorithms for informed optimization methods}
\label{sec:predictive}

This section presents the machine learning framework developed to address the problem of uncertainty in intervention times for workforce management. The forecast engine implements a modular architecture that integrates predictive algorithms with the stochastic VRP optimization pipeline introduced in the previous section.

\subsection{Mathematical Framework and Architecture Overview}

We now formalize the mathematical framework for duration prediction. We develop four model architectures addressing class imbalance, heterogeneous activity types, and multimodal duration distributions.

The forecast engine implements a unified mathematical framework for duration prediction. Let $(\Omega, \mathcal{F}, \mathbb{P})$ be a probability space representing the stochastic environment, and let $\mathcal{X} \subseteq \mathbb{R}^d$ denote the feature space with $\mathcal{Y} \subseteq \mathbb{R}_+$ representing the space of intervention durations. Let the target variable $Y$ correspond to the service duration $S$ defined in the VRP model in Section \ref{cvprtw}. The historical dataset $\mathcal{D} = \{(X_i, Y_i)\}_{i=1}^n$ consists of independent and identically distributed pairs where $X_i: \Omega \to \mathcal{X}$ are feature vectors and $Y_i: \Omega \to \mathcal{Y}$ are corresponding intervention durations.

The forecasting problem seeks to learn a predictive mapping $f^*: \mathcal{X} \to \mathcal{Y}$ that minimizes the expected risk under a specified loss function $\ell: \mathcal{Y} \times \mathcal{Y} \to \mathbb{R}_+$:
\begin{equation}
\label{forecast}
f^* = \arg\min_{f \in \mathcal{F}} \mathbb{E}_{(X,Y) \sim \mathbb{P}}[L (Y, f(X))]
\end{equation}
where $\mathcal{F}$ represents the hypothesis space of admissible functions.

The forecast engine transforms historical operational data into duration predictions for route optimization. The system has two phases: $(i)$ an offline training pipeline that learns patterns from historical data; $(ii)$ an online inference pipeline that generates predictions for daily planning. This separation enables exhaustive model optimization during training while preserving low-latency, i.e., fast inference at deployment.

The training pipeline (Figure~\ref{fig:train_pipeline}) processes historical data through three stages: data collection, feature engineering and model training with validation.

\begin{figure}[H]
    \centering
    \includegraphics[width=0.95\linewidth]{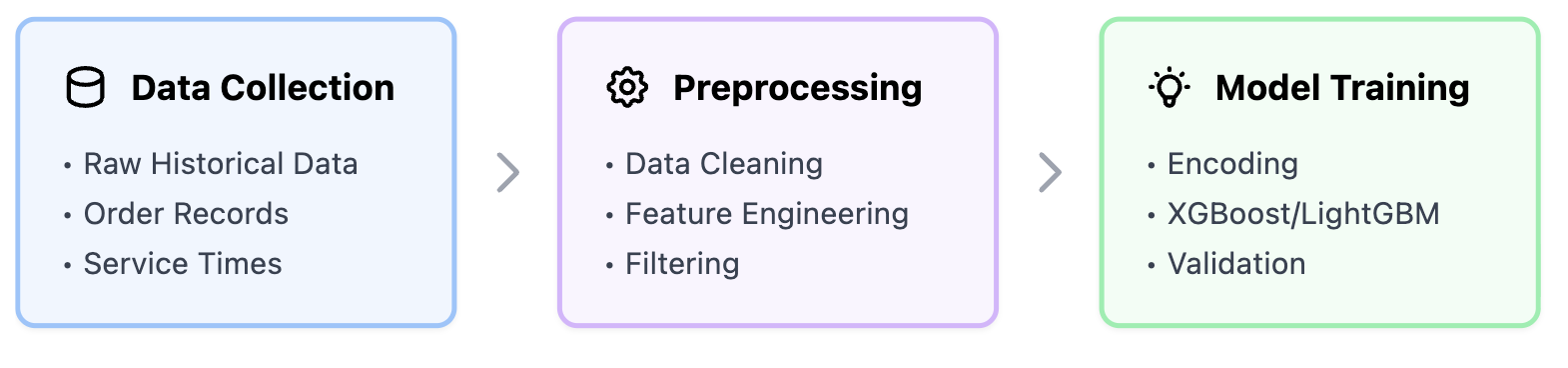}
    \caption{Training framework for duration prediction. 
    }
    \label{fig:train_pipeline}
\end{figure}

The {data preparation stage} validates data quality, detecting missing values, outliers, and inconsistencies. The dataset spans eight years and handles evolving activity categories and changing service areas.

The {feature engineering stage} transforms raw measurements into predictive features. Intervention durations depend on temporal patterns (hour-of-day, day-of-week, seasonality), geography (municipality, altitude, urbanization), and operational context (activity type, equipment, historical rates). We create interaction terms and polynomial features to capture these dependencies.

The {model estimation stage} trains four XGBoost architectures: standard, frequency-weighted (for class imbalance), dual (for heterogeneous activities), and dual-weighted (combined). We optimize hyperparameters using Bayesian optimization and validate using stratified k-fold cross-validation, temporal holdouts, and activity-specific performance analysis.

The inference framework (Figure~\ref{fig:inference_pipeline}) generates real-time duration predictions for VRP solvers, balancing speed and accuracy.

\begin{figure}[H]
    \centering
    \includegraphics[width=1.04\linewidth]{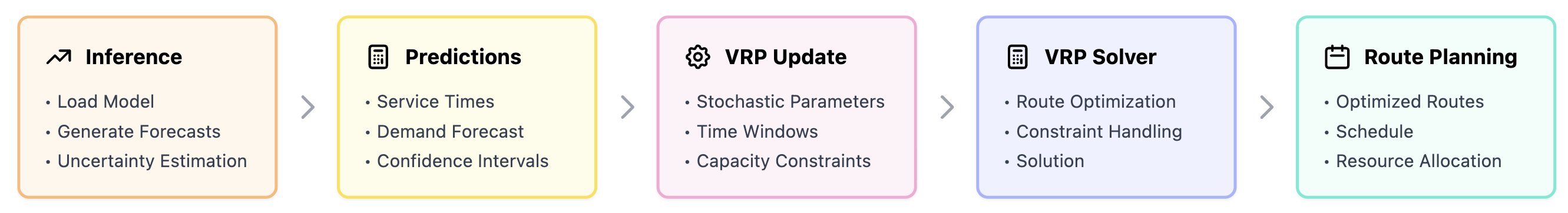}
    \caption{Inference framework for operational deployment. 
    }
    \label{fig:inference_pipeline}
\end{figure}

The {feature consistency module} applies the same transformations (cyclical encodings, interactions, normalizations) at inference as during training. It detects out-of-distribution inputs and handles missing values using validated imputation strategies.

The {model selection module} routes requests to specialized models: $f_Z$ handles Type-Z interventions (meter replacements), while $f_{\text{other}}$ handles standard activities.


\subsection{Model Architectures}

To approximate $f^*$ in Eq. \ref{forecast}, we try the following tailored gradient-boosting models:

\begin{enumerate}
    \item Standard Gradient Boosting Model
    \item Frequency-Weighted Gradient Boosting Model
    \item Dual Model Architecture
    \item Dual Models with Frequency Weighting
\end{enumerate}

\paragraph{1) Standard Gradient Boosting Model}

The baseline approach implements gradient boosting with uniform sample weighting. The model constructs an ensemble predictor through additive tree learning:
\begin{equation}
f_{\text{std}}(\mathbf{x}) = \sum_{m=1}^{M} \gamma_m \cdot T_m(\mathbf{x})
\end{equation}
where $M$ represents the total number of trees, $\gamma_m \in \mathbb{R}$ denotes the weight of tree $m$, and $T_m: \mathbb{R}^d \to \mathbb{R}$ represents the $m$-th decision tree mapping feature vectors to leaf values. The training process optimizes a regularized empirical risk functional:
\begin{equation}
\label{baseline_loss}
\mathcal{J}_{\text{std}}(\mathbf{f}) = \frac{1}{n} \sum_{i=1}^{n} L(y_i, f_{\text{std}}(\mathbf{x}_i)) + \lambda \sum_{m=1}^{M} \Phi(T_m)
\end{equation}
where $L: \mathbb{R} \times \mathbb{R} \to \mathbb{R}_+$ represents the loss function, $\lambda > 0$ controls regularization strength, and $\Phi(T_m)$ implements structural penalties on tree complexity, including leaf count and weight magnitudes.

\paragraph{2) Frequency-Weighted Gradient Boosting Model}

To address class imbalance across activity types, the framework implements sample-specific weighting based on activity type prevalence. The weighting mechanism assigns higher importance to underrepresented activity types through inverse frequency scaling:
\begin{equation}
\omega_i^{(c)} = \frac{n}{n_c \cdot |\mathcal{C}|}
\end{equation}
where $n$ represents total sample size, $n_c$ denotes samples from activity type $c$, and $|\mathcal{C}|$ represents the number of distinct activity types.

The weighted objective modifies the standard formulation by incorporating sample-specific importance:
\begin{equation}
\mathcal{J}_{\text{weighted}}(\mathbf{f}) = \frac{1}{n} \sum_{i=1}^{n} \omega_i^{(c_i)} \cdot L(y_i, f_{\text{weighted}}(\mathbf{x}_i)) + \lambda \sum_{m=1}^{M} \Phi(T_m)
\end{equation}
where $c_i$ denotes the activity type of sample $i$. This formulation ensures that rare activity types receive appropriate representation during model training despite their lower frequency in the operational data.

\subsubsection{Dual Models}

To account for the strong heterogeneity observed across activity types, as evidenced by the duration distributions in Figures~\ref{fig:violin}--\ref{fig:bins_norm}, separate predictive models were developed for specific categories of interventions. Precisely, we consider models that generalize Eq. \eqref{forecast} and that support heterogeneous modeling through domain decomposition.

Let $\{\mathcal{X}_k\}_{k=1}^K$ be a partition of the feature space such that $\bigcup_{k=1}^K \mathcal{X}_k = \mathcal{X}$ and $\mathcal{X}_i \cap \mathcal{X}_j = \emptyset$ for $i \neq j$. The heterogeneous predictor is then defined as:
\begin{equation}
f_{\text{het}}(\mathbf{x}) = \sum_{k=1}^K \mathds{1}_{x \in \mathcal{X}_k} \cdot f_k(\mathbf{x})
\end{equation}
where each $f_k: \mathcal{X}_k \to \mathcal{Y}$ is specialized for the patterns within subdomain $\mathcal{X}_k$. This decomposition enables the learning of specialized models that capture distinct operational regimes while maintaining global consistency.

\paragraph{3) Dual Models Architecture}

This approach implements a specialized dual architecture that trains separate models for distinct intervention categories. Building upon the heterogeneous predictor framework in Equation (18), we partition activities into two semantically meaningful classes based on the activity type feature $a(\mathbf{x})$:

\begin{equation}
f_{\text{dual}}(\mathbf{x}) = \begin{cases}
f_Z(\mathbf{x}) & \text{if } a(\mathbf{x}) = Z \\
f_{\text{other}}(\mathbf{x}) & \text{if } a(\mathbf{x}) \neq Z
\end{cases}
\end{equation}

where $a: \mathbb{R}^d \to \mathcal{C}$ represents the activity type classification function, $f_Z: \mathbb{R}^d \to \mathbb{R}_+$ specializes in Type-Z intervention prediction, and $f_{\text{other}}: \mathbb{R}^d \to \mathbb{R}_+$ handles all other activity types.

Each specialized component is trained using the standard gradient boosting objective (Eq. 15) but optimized for the distinct statistical characteristics of its operational domain. The Type-Z model emphasizes flexibility to capture high variability patterns through reduced regularization, while the other-types model prioritizes stability through enhanced regularization. This architectural separation enables each model to specialize in its respective operational regime while maintaining computational efficiency.

\paragraph{4) Dual Models with Frequency Weighting}

The most sophisticated approach combines architectural specialization with frequency-based weighting, addressing both structural heterogeneity and statistical imbalance. This represents a hierarchical extension of the dual architecture where each specialized component incorporates sample weighting:

\begin{equation}
\label{dual_weighted}
f_{\text{dual-weighted}}(\mathbf{x}) = \begin{cases}
f_{Z,w}(\mathbf{x}) & \text{if } a(\mathbf{x}) = Z \\
f_{\text{other},w}(\mathbf{x}) & \text{if } a(\mathbf{x}) \neq Z
\end{cases}
\end{equation}

Each component model implements frequency weighting computed within its respective operational domain using the weighting scheme from Equation (16). The Type-Z component $f_{Z,w}$ applies inverse frequency scaling based on subtype prevalence within Type-Z interventions, while the other-types component $f_{\text{other},w}$ computes weights relative to the distribution of non-Z activity types.

In summary, we present the key characteristics of the four model architectures in Table \ref{tab:model_summary}.

\begin{table}[H]
\centering
\caption{Summary of Model Architectures}
\label{tab:model_summary}
\begin{tabular}{lcc}
\hline
\textbf{} & \textbf{\# Models} & \textbf{Weighting} \\
\hline
Standard: Baseline  & 1 & \\
Frequency-Weighted: Class imbalance & 1 & \checkmark \\
Dual: Specialized models & 2 & \\
Dual-Weighted: Specialization + imbalance & 2 & \checkmark \\
\hline
\end{tabular}
\end{table}

A priori, developing separate models for each activity class might seem optimal, but analysis of duration distributions (Figure~\ref{fig:violin}) reveals that most non-Type-Z activities exhibit similar statistical properties. Moreover, training and maintaining numerous specialized models (beyond the dual approach) present operational challenges in production environments, including increased complexity and the related reduced reliability. Therefore, we select dual architecture as a pragmatic balance between specialization and operational feasibility.

\paragraph{Residual Analysis.} Recall that in Section~\ref{sec:opt_methods}, we assumed prediction residuals $\varepsilon_i$ follow a sub-Gaussian distribution to enable tractable route-level risk buffers. This assumption is empirically supported by the distribution of residuals from the Dual Weighted model. As shown in Figure~\ref{fig:error_dist}, the errors are centered at zero and exhibit a symmetric, bell-shaped distribution. The symmetric, bell-shaped distribution of residuals validates our assumption of sub-Gaussianity.

\begin{figure}[H]
    \centering
    \includegraphics[width=0.81\linewidth]{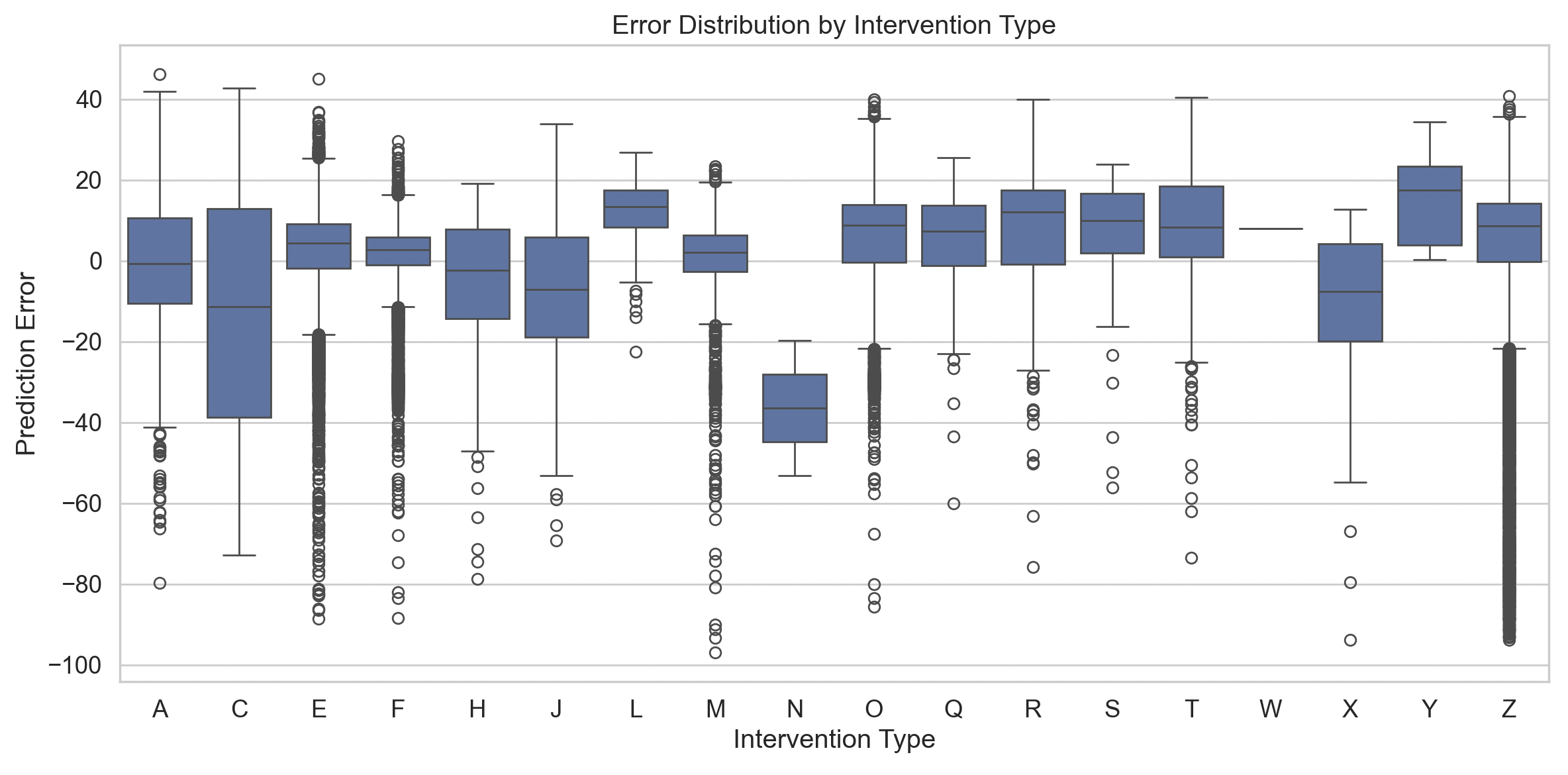}
    \caption{Distribution of prediction residuals ($\hat{y} - y$) from the Dual Weighted forecast model. The symmetric, bell-shaped curve supports the sub-Gaussian assumption used for risk modeling.}
    \label{fig:error_dist}
\end{figure}

\subsubsection{Estimating Proxy Variance for Risk Buffers}
\label{sec:proxy_variance}

While the XGBoost model provides point predictions $\mu_i$ representing the expected duration of an intervention, the robustness of the routing schedule relies on the sub-Gaussian risk buffers defined in Eq.~\eqref{eq:sg-buffer}. These buffers require an estimate of the proxy variance $\sigma_i^2$ for each activity, which quantifies the uncertainty of the prediction.

We estimate these variances using a residual-based approach stratified by activity type. This method leverages the historical prediction errors on the validation set to construct a lookup table of uncertainties. For each activity class $c \in \mathcal{C}$, the empirical proxy variance is computed as:
\begin{equation}
\label{eq:sigma_est}
\hat{\sigma}_c^2 = \frac{1}{n_c}\sum_{i \in \mathcal{V}_c} (y_i - \hat{\mu}_i)^2,
\end{equation}
where $\mathcal{V}_c$ is the set of validation samples of type $c$, and $n_c = |\mathcal{V}_c|$. This approach assumes that interventions of the same type share a similar error profile, an assumption supported by the residual analysis in Figure~\ref{fig:error_dist}.

\paragraph{Bridging Forecasts and Optimization}
This variance estimate serves as the critical link between the machine learning module and the stochastic optimization constraints (Proposition \ref{prop3.1}). In practice, the integration functions as a closed-loop system:

\begin{enumerate}
    \item For a new intervention $i$, the ML model predicts the expected duration $\mu_i$, while the uncertainty module assigns the corresponding variance $\sigma_{i}^2$ based on Eq.~\eqref{eq:sigma_est}.
    \item During the evolutionary search, for any candidate route $R$, the solver calculates the deterministic length ($\sum \mu_i + \sum t_{ij}$) and aggregates the uncertainty to compute the dynamic risk buffer $\Delta_\alpha(R)$ defined in Eq.~\eqref{eq:sg-buffer}:
    \[
    \Delta_\alpha(R) = \sqrt{2\log(1/\alpha)\sum_{i \in R}\sigma_i^2}.
    \]
    \item The route is deemed feasible only if the sum of the deterministic components and the risk buffer remains within the shift limit $H_k$, as per Proposition \ref{prop3.1}.
\end{enumerate}

This mechanism ensures adaptive conservatism. Interventions with high predictive uncertainty (e.g., Type-Z meter replacements, which exhibit high variance in residuals) contribute larger values to $\sum \sigma_i^2$, automatically generating a larger safety buffer $\Delta_\alpha(R)$. Conversely, routine activities with low variance result in tighter buffers, allowing for higher resource utilization. The parameter $\alpha_k$ acts as a "tuning knob" for calibration, allowing decision-makers to explicitly set the acceptable risk level (e.g., a 5\% probability of overtime) without requiring computationally expensive scenario sampling.

Figure~\ref{fig:predictions_with_ci} illustrates these estimates, showing how the confidence intervals (derived from $\sigma_i$) widen for complex activities and narrow for routine tasks.

\begin{figure}[H]
    \centering
    \includegraphics[width=0.85\linewidth]{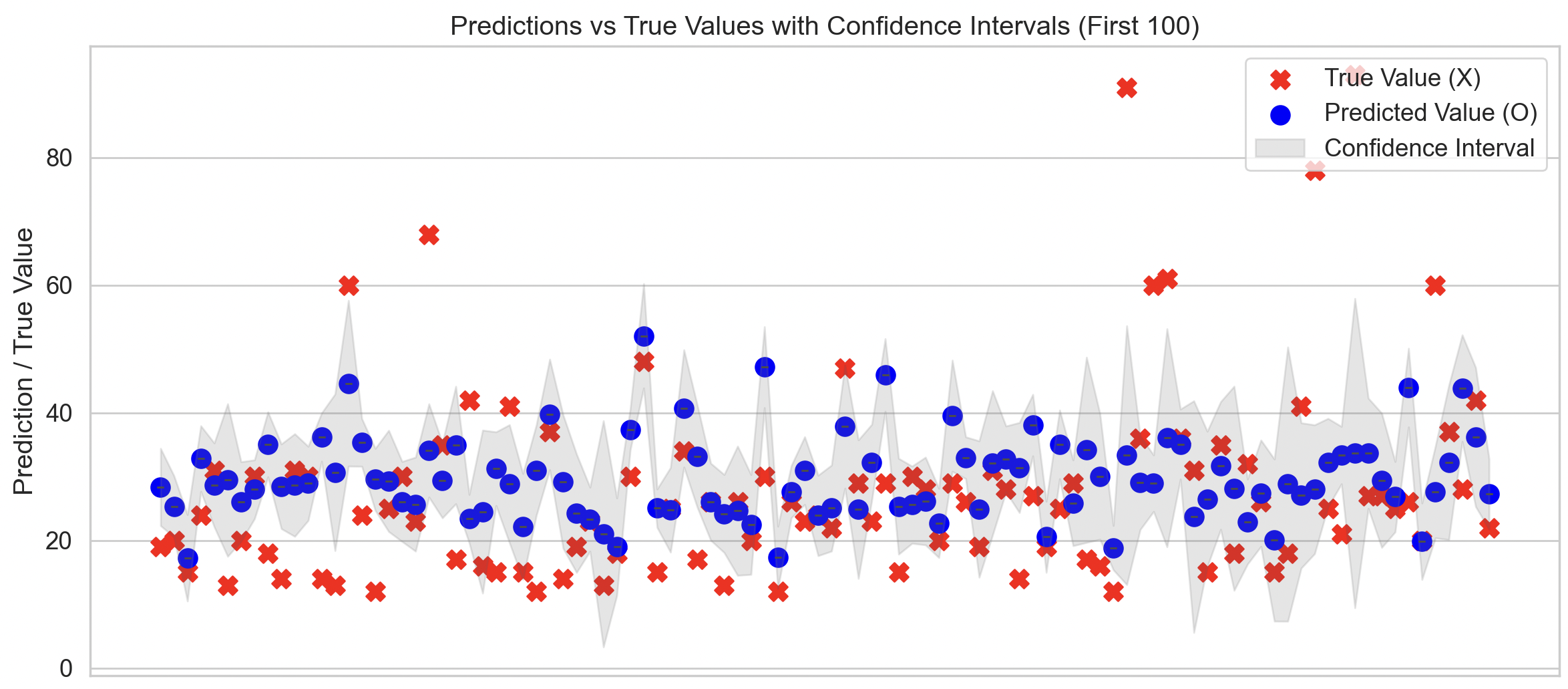}
    \caption{Duration predictions with uncertainty estimates.}
    \label{fig:predictions_with_ci}
\end{figure}

\paragraph{The Evaluation Module}

Performance evaluation employs multiple complementary metrics designed to capture different aspects of prediction quality:
\begin{equation}
\label{mae}
\text{MAE} = \frac{1}{n_{\text{test}}} \sum_{i=1}^{n_{\text{test}}} |y_i - \hat{y}_i|
\end{equation}

\begin{equation}
\label{rmse}
\text{RMSE} = \sqrt{\frac{1}{n_{\text{test}}} \sum_{i=1}^{n_{\text{test}}} (y_i - \hat{y}_i)^2}
\end{equation}

\begin{equation}
\label{mape}
\text{MAPE} = \frac{100\%}{n_{\text{test}}} \sum_{i=1}^{n_{\text{test}}} \left|\frac{y_i - \hat{y}_i}{y_i}\right|
\end{equation}

The hyperparameters of the learning algorithms (1) -(4) introduced in the previous paragraph have been calibrated via a systematic Bayesian optimization. The optimization process explores the hyperparameter space $\Theta$ to minimize validation error:
\begin{equation}
\boldsymbol{\theta}^* = \arg\min_{\boldsymbol{\theta} \in \Theta} \mathbb{E}_{\text{val}}[\mathcal{L}(\boldsymbol{\theta})]
\end{equation}
where $\mathbb{E}_{\text{val}}[\mathcal{L}(\boldsymbol{\theta})]$ represents expected validation loss under hyperparameter configuration $\boldsymbol{\theta}$.

The optimization framework employs acquisition functions that balance exploration of uncertain regions with exploitation of promising parameter combinations. The expected improvement criterion guides the search process:
\begin{equation}
\text{EI}(\boldsymbol{\theta}) = \mathbb{E}[\max(f_{\min} - f(\boldsymbol{\theta}), 0)]
\end{equation}
where $f_{\min}$ represents the current best observed performance and $f(\boldsymbol{\theta})$ denotes the objective function value at parameter configuration $\boldsymbol{\theta}$.

\section{Case Study and Experimental Methodology}
\label{sec:experimental}

This section presents the experimental framework designed to evaluate machine learning-based duration prediction for routing optimization in real-world utility service operations.

\subsection{The VRP Model for Gas Meter Intervention}

In the context of gas meter maintenance operations, accurate prediction of intervention durations is crucial for optimal route planning and resource allocation. This chapter presents a comprehensive methodology for evaluating the impact of different duration estimation strategies on Vehicle Routing Problem (VRP) solutions through evolutionary algorithm optimisation. We develop a comparative framework that to compare three distinct duration input modes: \textit{real}, \textit{default}, and \textit{forecast}, enabling quantitative assessment of prediction accuracy impact on routing efficiency.

The gas meter maintenance routing problem can be formalized as a Capacitated Vehicle Routing Problem with Time Windows (CVRPTW), where:

\begin{itemize}
    \item $G = (V, A)$ represents a complete directed graph
    \item $V = \{0, 1, 2, \ldots, n, n+1\}$ is the vertex set, where vertices $0$ and $n+1$ represent the depot
    \item $A = \{(i,j) : i, j \in V, i \neq j\}$ is the arc set
    \item Each customer $i \in \{1, 2, \ldots, n\}$ has:
        \begin{itemize}
            \item Service time window $[e_i, l_i]$
            \item Service duration $s_i$ (the focus of our study)
            \item Demand $d_i$
        \end{itemize}
    \item $K$ identical vehicles with capacity $Q$
    \item Travel cost $c_{ij}$ and travel time $t_{ij}$ (min) between locations $i$ and $j$
\end{itemize}

The objective is to minimize total travel cost while respecting capacity and time window constraints. As per the notation in Table~\ref{tab:notation}, we distinguish between travel cost $c_{ij}$ (routing objective) and travel time $t_{ij}$ (temporal feasibility), with service duration denoted $S_i$ to align with the stochastic formulation in Section 3.1.

The critical challenge addressed in this research is the accurate estimation of service durations $S_i$ for each intervention. Traditional approaches often rely on static estimates or historical averages, which fail to capture the complexity and variability inherent in gas meter maintenance operations.

\subsection{Experimental Setup: Dataset and Feature Engineering}

The experimental analysis utilizes extensive real-world utility service data collected over an eight-year operational period spanning 2017-2025, encompassing 884,349 individual service requests across multiple utility distribution networks.

\begin{enumerate}
    
\item \textit{Intervention Taxonomy}:  Distinct activity types (A, C, E, F, H, I, J, L, M, N, O, Q, R, S, T, W, X, Z)  representing the full spectrum of gas meter operations. We refer to the table \ref{tab:activity_statistics} in the Appendix for a complete description of each class.

\item \textit{Temporal Features}: Each intervention record includes request timestamp, scheduled appointment, actual arrival time, and completion time.

\item \textit{Geographic Features}: Location-based attributes include municipality identifiers, altitude, urbanization degree, population density, and surface area.

\item \textit{Operational Features}: Activity-specific characteristics such as meter class (residential vs. commercial/industrial), physical accessibility level, reading difficulty, and equipment protocol specifications.

\item \textit{Client Source}: The dataset aggregates records from five distinct utility clients, each with potentially different operational practices, recording standards, and service area characteristics.
\end{enumerate}

\paragraph{Data Split and Evaluation Strategy} We adopt a random date-based split where specific dates (e.g., June 5-7, 2024) are held out from the training set and used exclusively for testing. This ensures that the model is evaluated on complete days not seen during training, capturing realistic operational scenarios where predictions must generalize to new dates. All features used for prediction are available at planning time (prior to route execution), excluding any post-service information (e.g., actual completion times, reported issues) to prevent data leakage. This date-level holdout differs from random sample splitting, which could mix activities from the same day across training and test sets, and provides a more realistic assessment of model performance in production deployment.

\begin{figure}[H]
\centering
\begin{tabular}{cc}
\begin{subfigure}{0.45\textwidth}
    \centering
\includegraphics[width=0.82\linewidth]{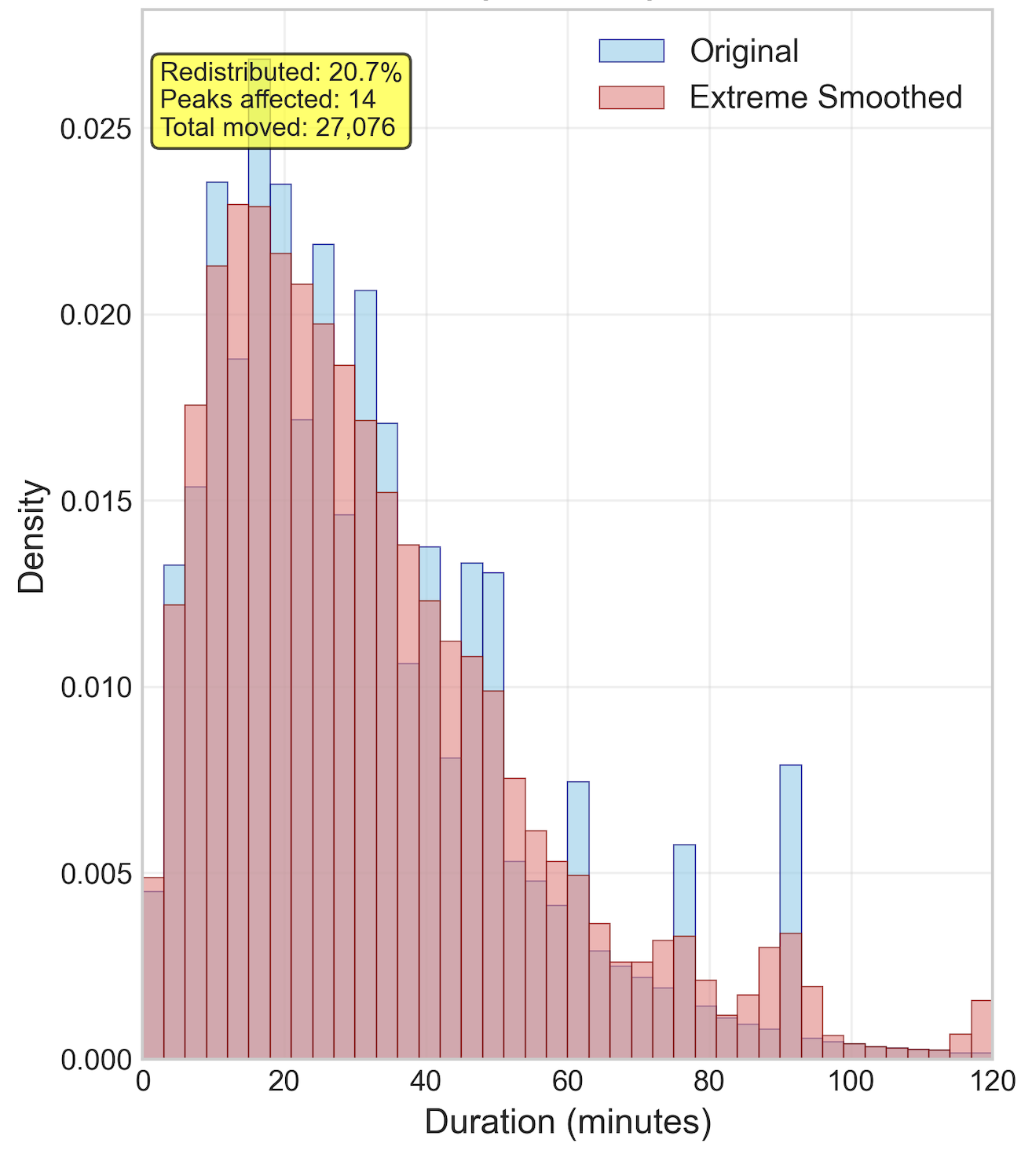}
    \label{fig:histograms}
\end{subfigure}
&
\begin{subfigure}{0.43\textwidth}
    \centering
\includegraphics[width=0.82\linewidth]{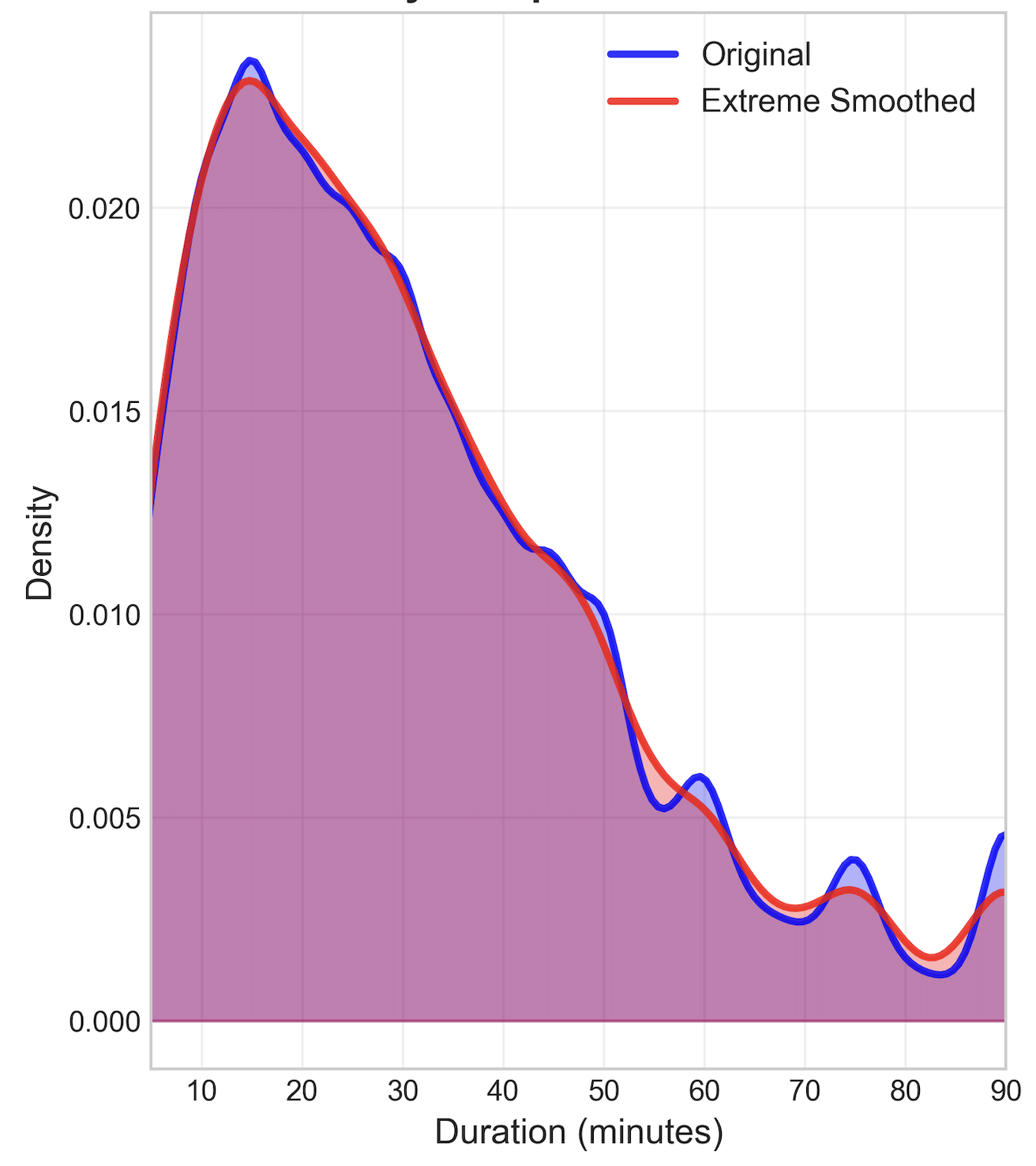}
    \label{fig:density}
\end{subfigure}
\end{tabular}
\caption{(a) Original and smoothed duration distributions. (b) Corresponding probability density functions showing the smoothing effect.}
\label{fig:distributions}
\end{figure}

The raw distribution of intervention durations exhibits artificial spikes at round-number values (e.g., $5$, $10$, $15$ minutes), typically caused by reporting biases rather than genuine operational patterns. A peak smoothing algorithm was applied: detected peaks were partially redistributed to neighboring values within a defined range. This process reduces artificial clustering while preserving overall statistical properties, providing higher-quality inputs for predictive modeling and more reliable cumulative and density functions.

\begin{figure}[H]
\centering
\includegraphics[width=0.666\textwidth]{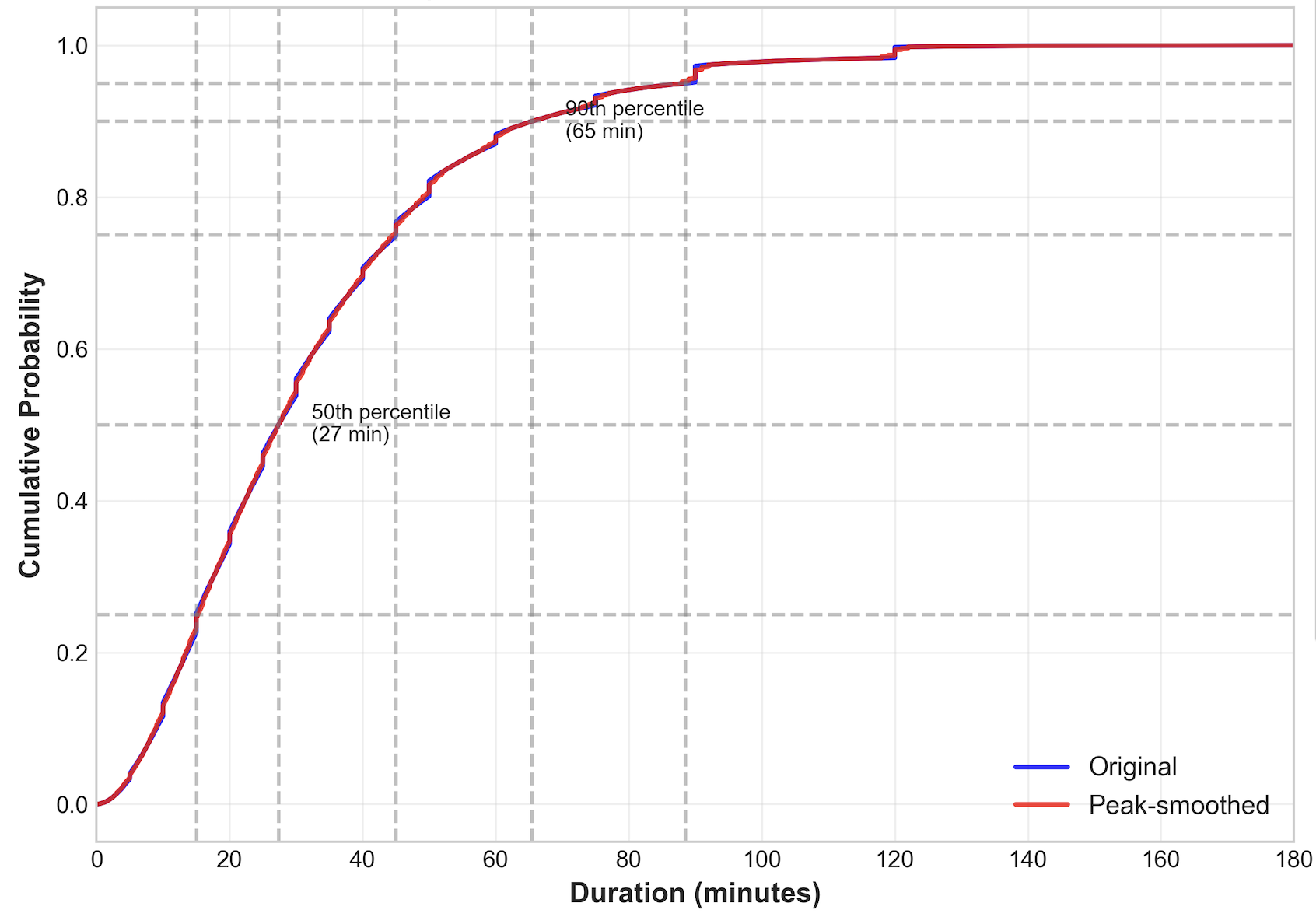}
\caption{Cumulative distribution function (CDF) of intervention durations.}
\label{fig:cdf}
\end{figure}

Statistical analysis reveals highly skewed duration distributions with heavy tails, reflecting inherent variability from routine 15-minute operations to complex multi-hour interventions. The temporal distribution exhibits both deterministic patterns following daily and seasonal cycles, and stochastic variations driven by unpredictable operational factors.

The feature engineering pipeline constructs a $31$-dimensional representation through domain-specific transformations designed to capture multiscale temporal and spatial dependencies. We perform the following techniques:

\begin{enumerate}

\item \textit{Cyclical Temporal Encoding}: Time-based variables exhibiting natural periodicity are encoded using trigonometric transformations to preserve topological relationships. For a temporal variable $t \in [0, T)$ with period $T$:

\begin{equation}
\phi_{\text{cyc}}(t; T) = \left(\sin\left(\frac{2\pi t}{T}\right), \cos\left(\frac{2\pi t}{T}\right)\right) \in \mathbb{S}^1
\end{equation}

This encoding is applied across multiple temporal scales: hourly patterns capturing intra-day operational variations, daily patterns representing day-of-week effects, and seasonal patterns encoding monthly demand cycles.

\item \textit{Geographic Feature Extraction}: Spatial attributes derived from municipality-level administrative data include elevation and altimetric zone classification, population and surface area for density calculations, and urbanization degree categories.

\item \textit{Activity Type Encoding}: Categorical intervention types are encoded to preserve semantic relationships between operationally similar activities while maintaining model interpretability.

 \end{enumerate}

\subsection{Performance of Forecast Models}
\label{sec:numerical}

The dataset partitioning strategy employs stratified random sampling with 80\% allocated to training, 10\% to validation, and 10\% to test, ensuring balanced representation across activity types.

Table~\ref{tab:model_performance} presents comprehensive evaluation metrics, i.e., the ones introduced in Eq. \eqref{mae}, \eqref{rmse}, \eqref{mape}, across all model configurations.

\begin{table}[H]
\centering
\caption{Evaluation Metrics (MAE, RMSE, MAPE) for Each Model and Dataset}
\label{tab:model_performance}
\begin{tabular}{llccc}
\toprule
\textbf{Model} & \textbf{Set} & \textbf{MAE} & \textbf{RMSE} & \textbf{MAPE} \\
\midrule
\multirow{3}{*}{Standard} 
    & Train & 5.1961 & 8.3679 & 27.63\% \\
    & Validation & 5.3776 & 8.6604 & 28.63\% \\
    & Test & 5.2793 & 8.5481 & 27.71\% \\
\midrule
\multirow{3}{*}{Weighted} 
    & Train & 5.2389 & 8.3844 & 27.79\% \\
    & Validation & 5.4098 & 8.6654 & 28.76\% \\
    & Test & 5.3228 & 8.5807 & 27.86\% \\
\midrule
\multirow{3}{*}{Dual Standard} 
    & Train & 5.0488 & 8.1166 & 26.75\% \\
    & Validation & 5.3036 & 8.5177 & 28.11\% \\
    & Test & 5.2374 & 8.4473 & 27.36\% \\
\midrule
\multirow{3}{*}{Dual Weighted} 
    & Train & 4.5568 & 7.0698 & 24.35\% \\
    & Validation & 5.1240 & 8.1976 & 27.21\% \\
    & Test & 5.0629 & 8.1022 & 26.51\% \\
\bottomrule
\end{tabular}
\end{table}

\subsubsection{Discussions of the Prediction Results}


This motivated the \texttt{Weighted Model} configuration applying inverse frequency weighting. Table~\ref{tab:model_performance} shows minimal improvement: MAE 5.2793 (Standard) $\rightarrow$ 5.3228 (Weighted), MAPE 27.71\% $\rightarrow$ 27.86\%. The Standard model already generalizes effectively across activity types due to distributional similarity—the numerical imbalance does not induce typical minority class performance degradation that frequency weighting addresses.


Activity type \texttt{Z} (Meter replacement) represents a compositionally distinct operation combining two sequential sub-tasks (meter removal + installation), whereas other categories represent atomic operations. This manifests in duration distribution, plotted in Fig. \ref{fig:bins_norm}: Type \texttt{Z} exhibits bimodal patterns with prominent peaks near $60$ minutes, contrasting with unimodal distributions of other activities. Additionally, meter class (residential vs. commercial/industrial) exerts a stronger influence on \texttt{Z} durations due to varying equipment complexity.

The \texttt{Dual Model} architecture trains separate, specialized models: $f_Z$, optimized for Type Z with the flexibility to capture bimodal patterns, and $f_{\text{other}}$, for the remaining activities with enhanced regularization. Table~\ref{tab:model_performance} shows architectural specialization yields measurable gains: \texttt{Dual Standard} achieves 5.2374 MAE (0.4 min improvement), while \texttt{Dual Weighted achieves} 5.0629 MAE (2.2 min improvement), reaching optimal 26.51\% MAPE.

The improvement given from the \texttt{Dual Weighted} configuration validates that semantic heterogeneity (the compositional complexity of Type Z) warrants architectural separation, with frequency weighting within each specialized model providing incremental refinement.

\subsubsection{Comparison Against Operational Baseline}

The experimental evaluation includes comparison against operational baseline models currently deployed in production systems. These implement rule-based duration estimation using fixed default values $d_{\text{baseline}}(c)$ assigned per activity type $c \in \mathcal{C}$.

Figure~\ref{fig:comparison} and Figure~\ref{fig:fig3} demonstrate significant improvements across all machine learning configurations compared to traditional rule-based estimation. The forecast strategy achieves a mean accuracy score of 0.456 compared to 0.369 for the default, representing 23.6\% improvement in planning accuracy.

\begin{figure}[H]
    \centering
\includegraphics[width=0.86\linewidth]{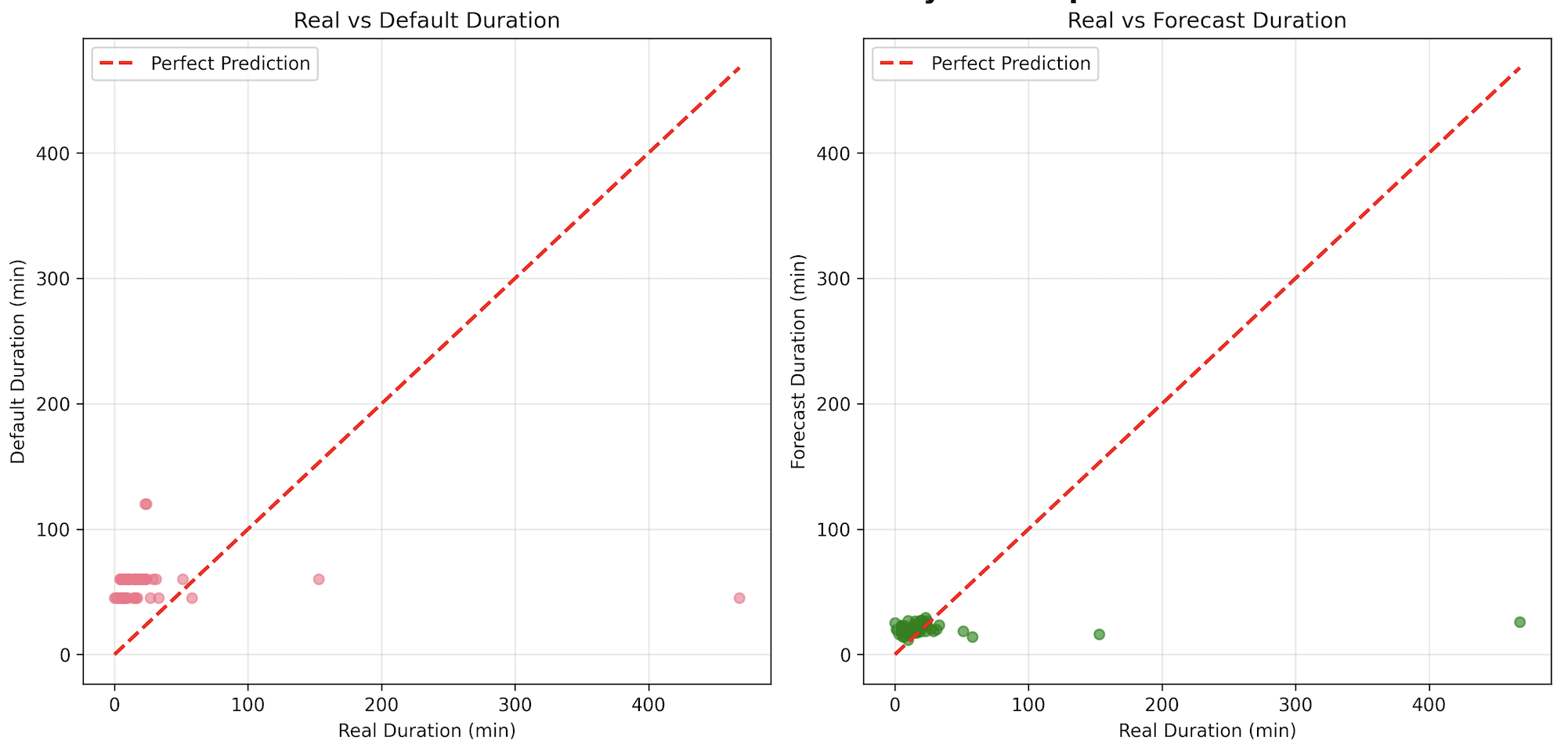}
    \caption{Prediction accuracy comparison: Real vs. Forecast vs. Default duration estimates}
    \label{fig:comparison}
\end{figure}

\begin{figure}[H]
    \centering
    \includegraphics[width=0.52\linewidth]{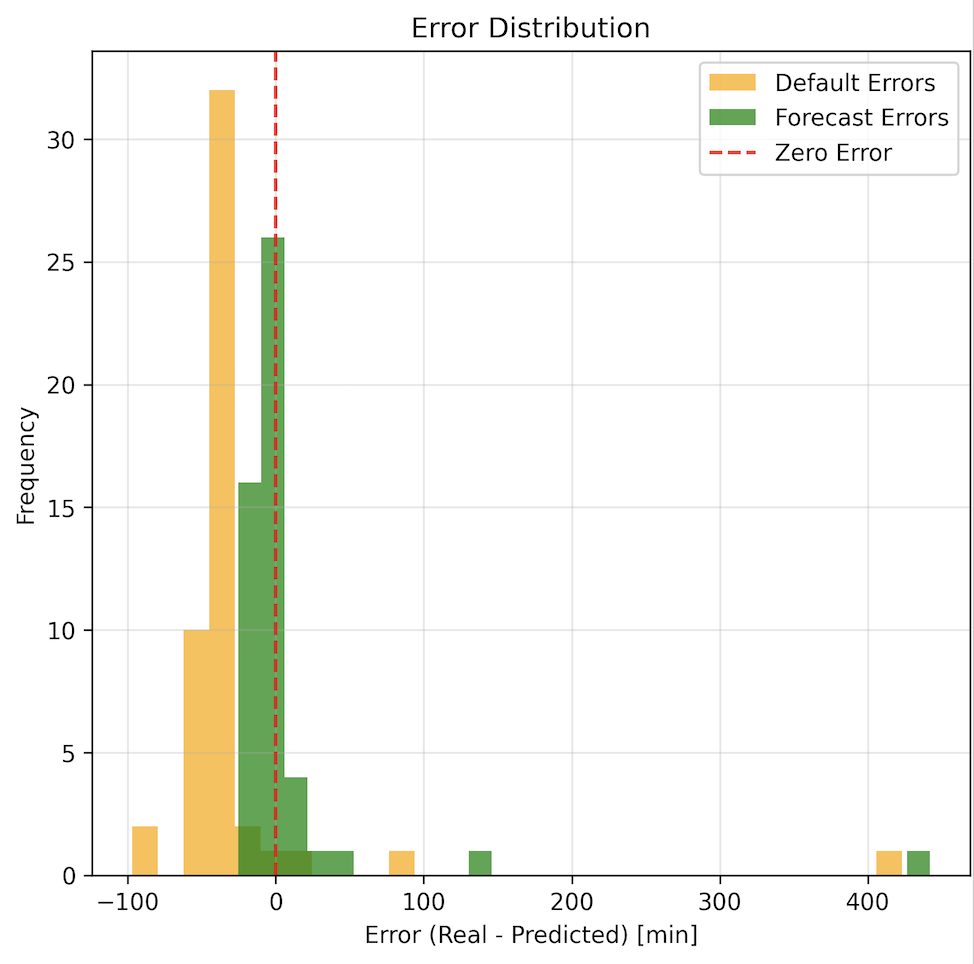}
    \caption{Error distribution shift: Forecast predictions vs. default values}
    \label{fig:fig3}
\end{figure}

Figure~\ref{fig:duration_boxplot_ezf} provides a detailed view for the most frequent activity types (E, Z, F), showing how forecasts closely match actual durations.

\begin{figure}[H]
    \centering
    \includegraphics[width=0.8\linewidth]{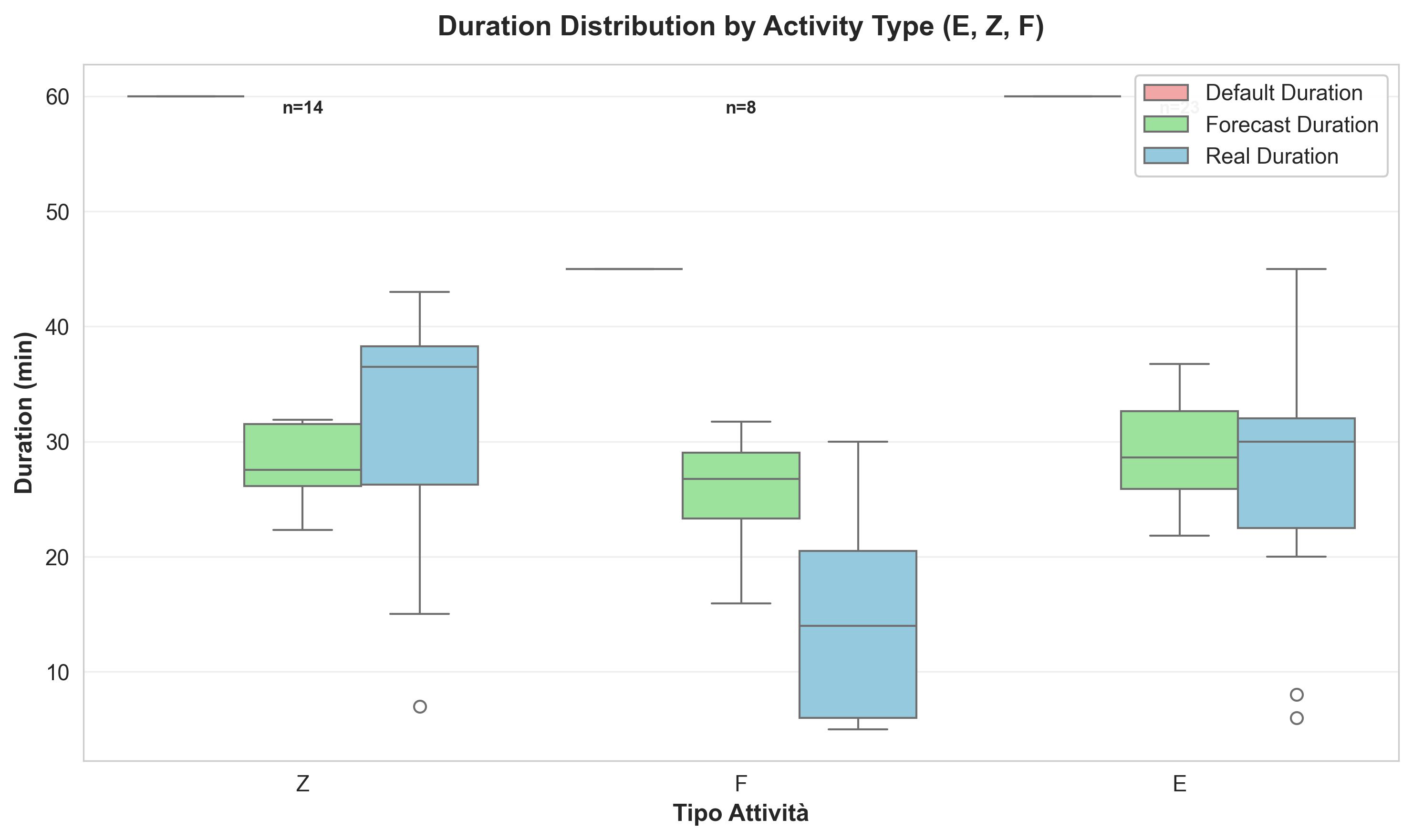}
    \caption{Detailed comparison of duration estimates for high-frequency activities.}
    \label{fig:duration_boxplot_ezf}
\end{figure}

\subsubsection{Model Comparison Across Daily Data}

To evaluate model performance consistency, we analyze duration predictions across multiple representative operational days. We compare four model architectures: Standard, Weighted by Frequency, Dual Standard, and Dual Weighted by Frequency.

\paragraph{Duration Distribution Analysis}

Figure ~\ref{fig:duration_dist_20170411} shows duration distributions comparing model predictions against actual values for a representative day.

\begin{figure}[H]
    \centering
    \begin{subfigure}[b]{0.48\textwidth}
        \centering
        \includegraphics[width=\textwidth]{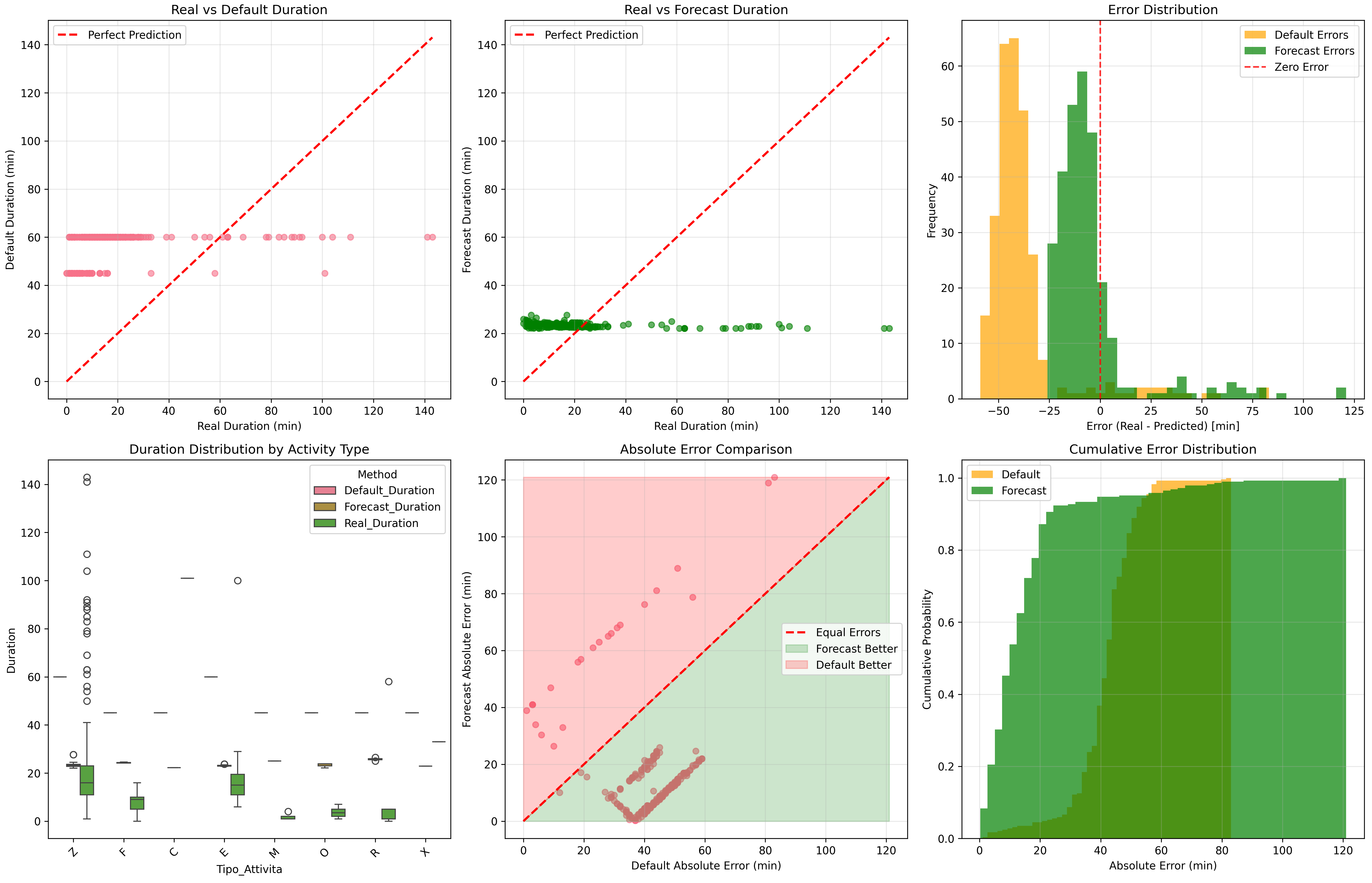}
        \caption{Standard model}
    \end{subfigure}
    \hfill
    \begin{subfigure}[b]{0.48\textwidth}
        \centering
        \includegraphics[width=1.0\linewidth]{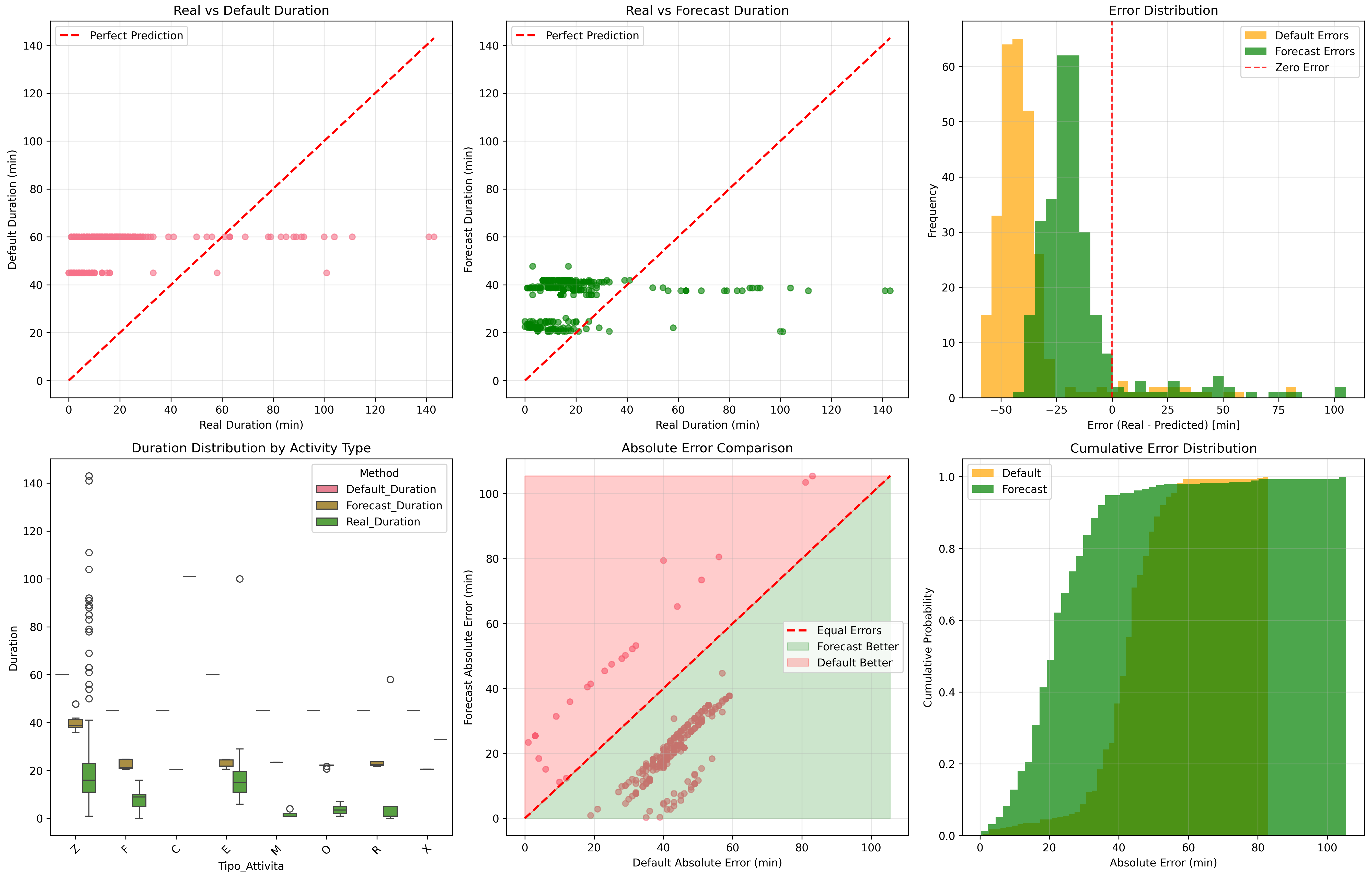}
        \caption{Dual weighted by frequency model}
    \end{subfigure}
    \caption{Duration distribution comparison for 1 representative day.}
    \label{fig:duration_dist_20170411}
\end{figure}

\paragraph{Boxplot Analysis by Model Architecture}

Figure \ref{fig:boxplot_20220303} presents detailed boxplot comparisons for a representative day, showing all four model architectures.

\begin{figure}[H]
    \centering
    \begin{subfigure}[b]{0.48\textwidth}
        \centering
        \includegraphics[width=\textwidth]{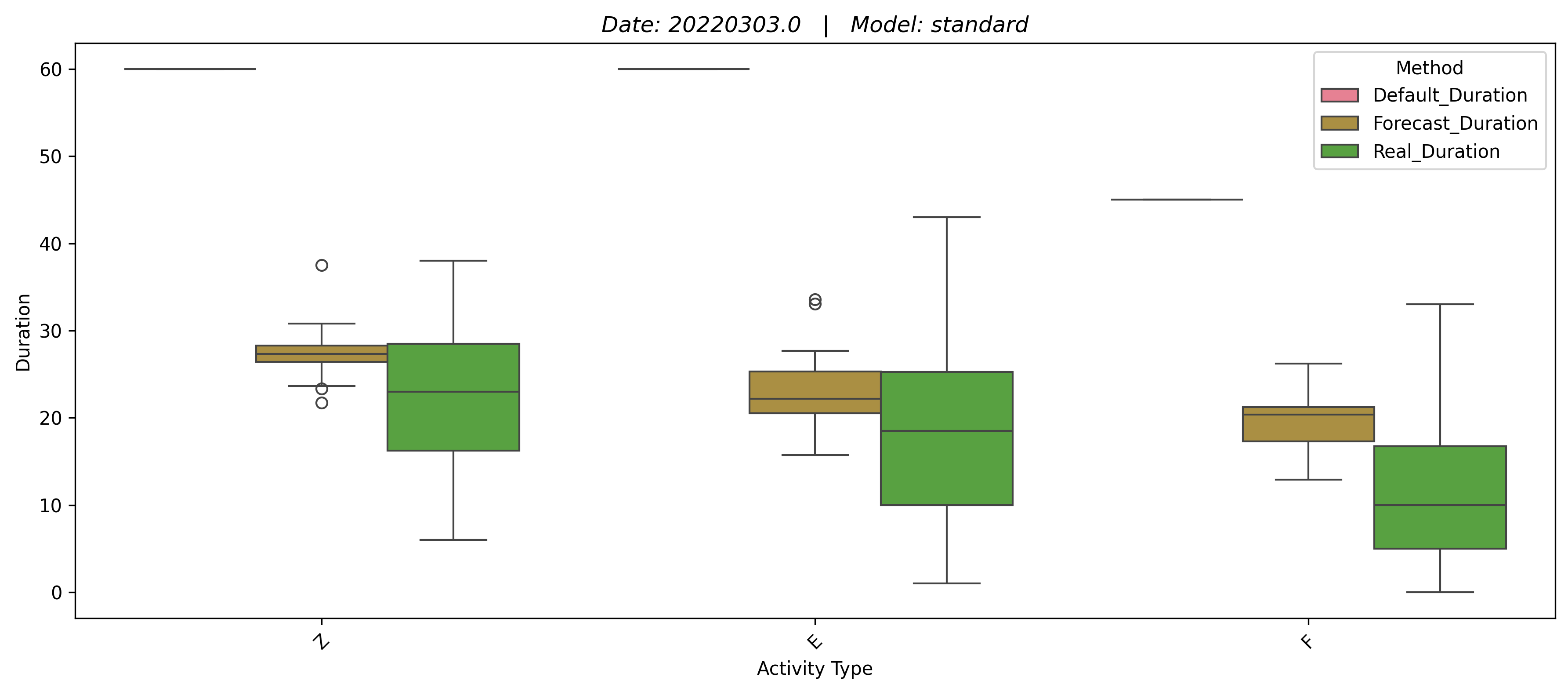}
        \caption{Standard}
    \end{subfigure}
    \hfill
    \begin{subfigure}[b]{0.48\textwidth}
        \centering
\includegraphics[width=1.0\linewidth]{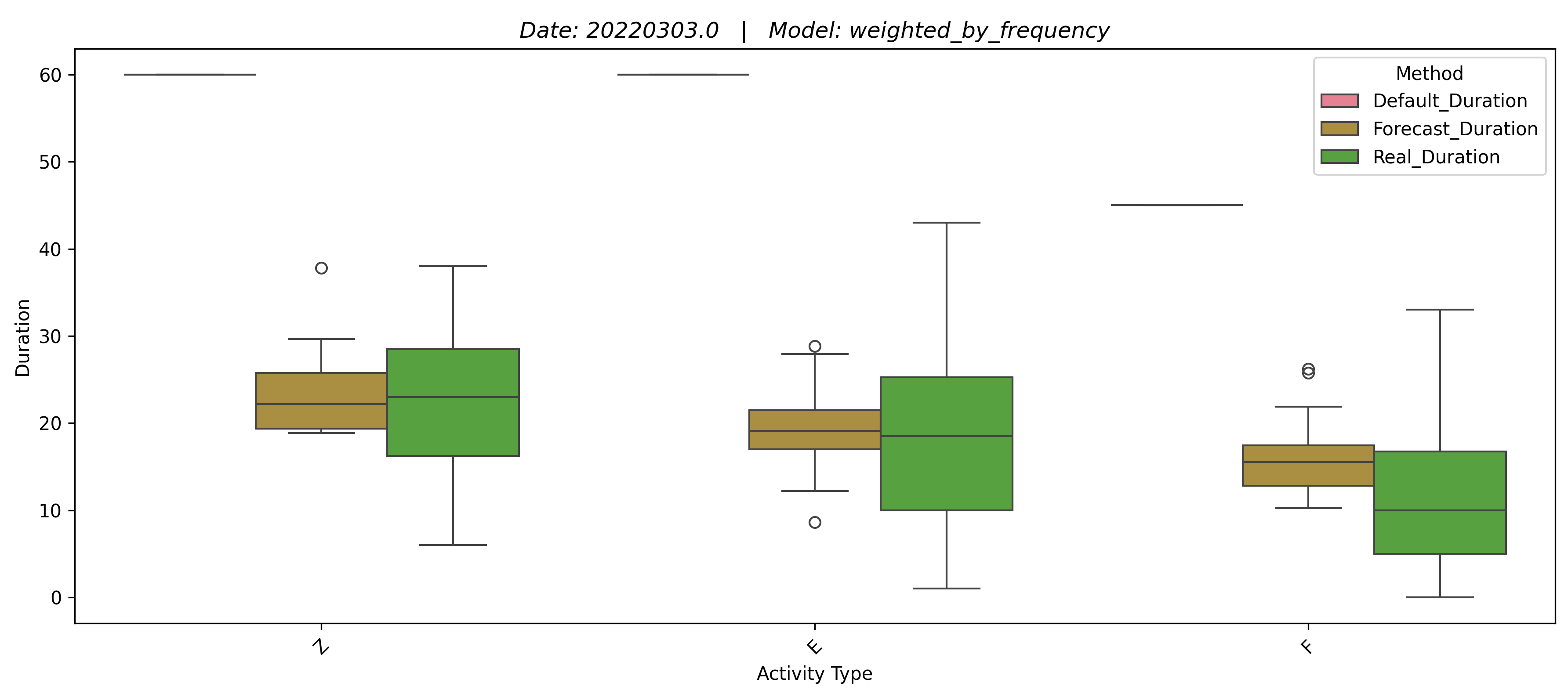}
        \caption{Weighted by frequency}
    \end{subfigure}

    \vspace{0.3cm}

    \begin{subfigure}[b]{0.48\textwidth}
        \centering
        \includegraphics[width=\textwidth]{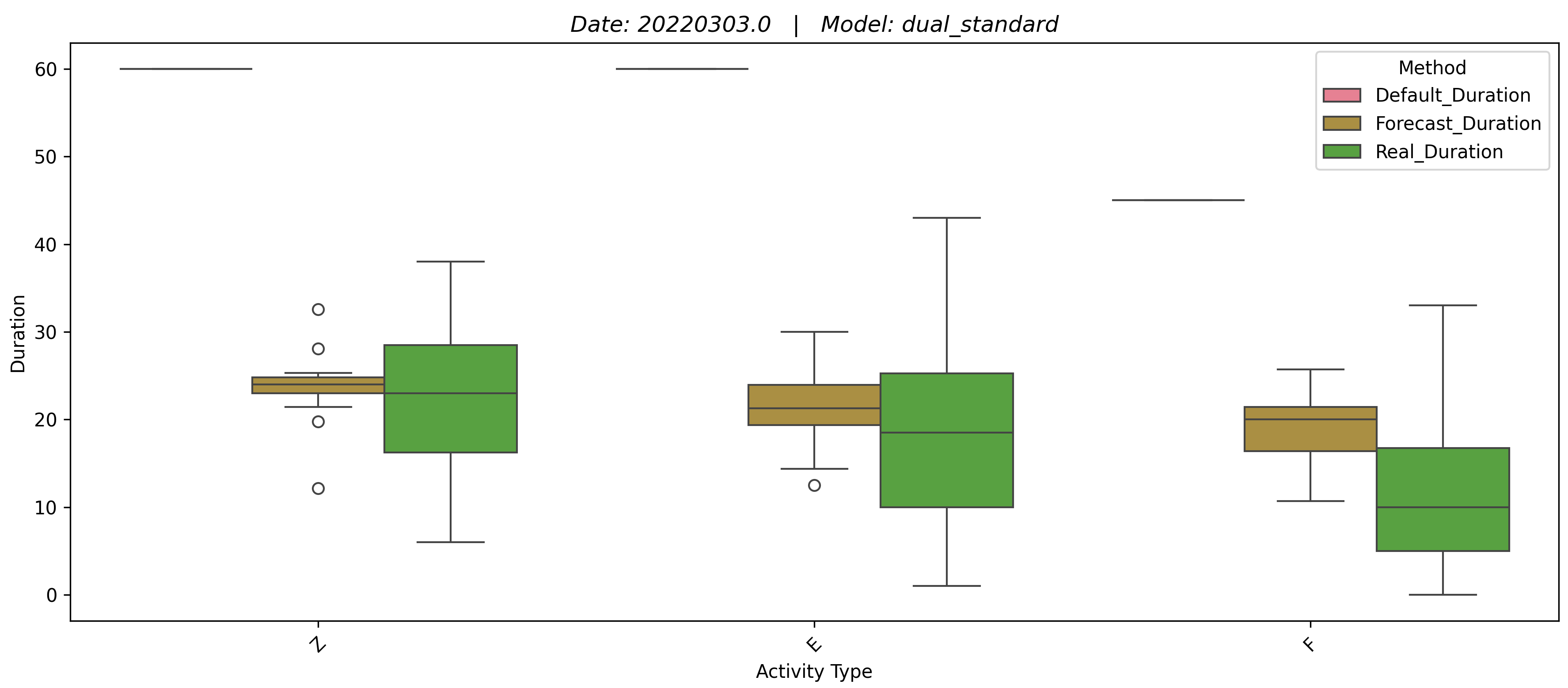}
        \caption{Dual standard}
    \end{subfigure}
    \hfill
    \begin{subfigure}[b]{0.48\textwidth}
        \centering
        \includegraphics[width=1.0\linewidth]{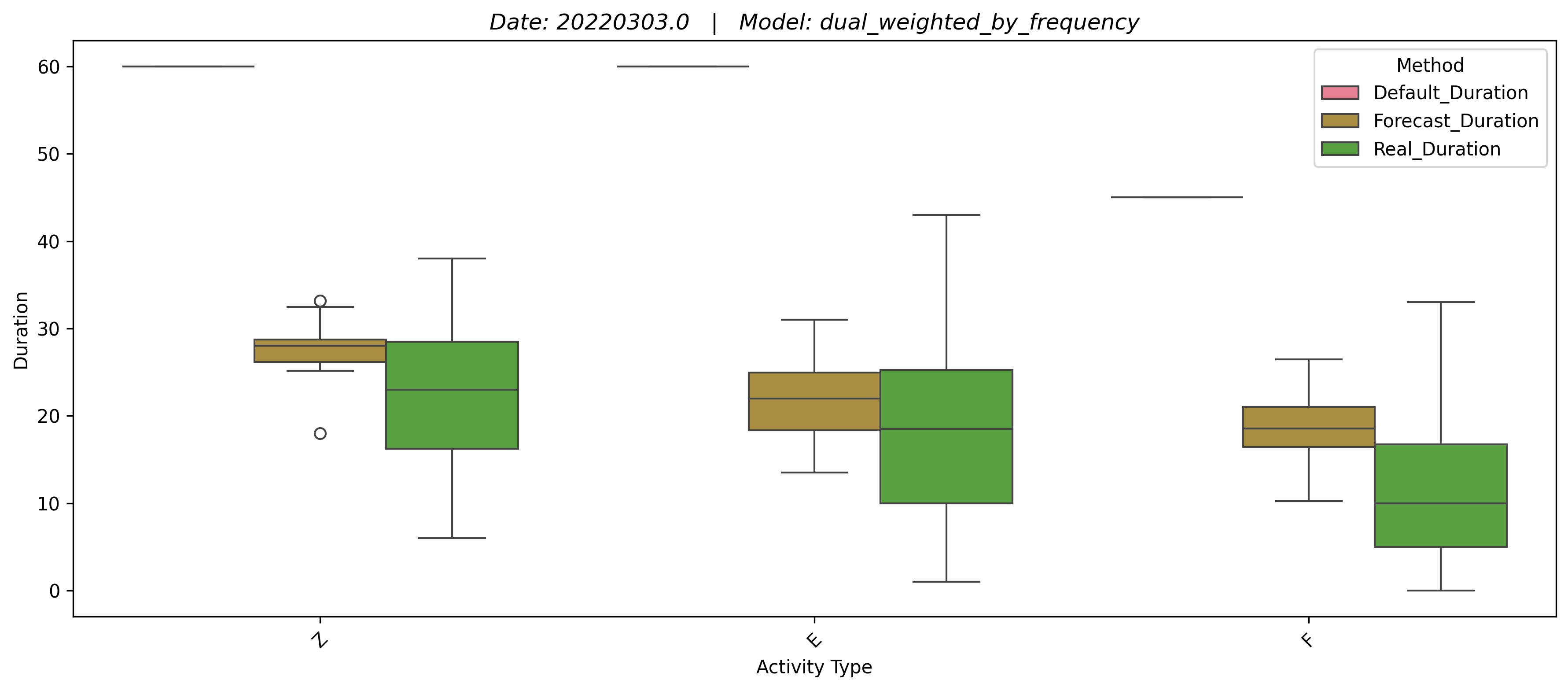}
        \caption{Dual weighted by frequency}
    \end{subfigure}
    \caption{Duration distributions by Activity Type for different Models (119 interventions)}
    \label{fig:boxplot_20220303}
\end{figure}

The boxplot analysis reveals that dual weighted models consistently achieve tighter prediction intervals and better alignment with actual durations across different operational scenarios.

 While the point prediction performance (MAE, RMSE) demonstrates substantial improvement over baseline methods, risk-aware routing as formulated in Section 3.1 requires well-calibrated prediction intervals. The sub-Gaussian buffers (Eq.~\ref{eq:sg-buffer}) and conformal prediction approaches discussed in Section 4 depend critically on accurate uncertainty estimates.

 Our analysis of prediction residuals reveals heteroskedasticity across activity types and operational dates. This finding motivates heterogeneous variance modeling where $\sigma_i^2$ depends on contextual features $x_i$ rather than assuming constant variance across all interventions. This approach is particularly important for Type-Z interventions, which exhibit significantly higher variance than routine maintenance activities.
 
 Properly calibrated intervals would enable principled route-level risk management via Proposition \ref{prop3.1}, balancing service coverage against overtime risk through adjustable confidence levels $\alpha_k$.

\section{VRPs Simulation with Duration Prediction}
\label{sec:vrp}

This section presents the experimental framework designed to evaluate the impact of duration prediction accuracy on routing optimization performance.
  Our methodology employs a systematic comparative analysis across three duration estimation strategies, implemented within the evolutionary algorithm
  framework described in Algorithm \ref{alg:moea-svrp}.

  We establish three distinct approaches for determining service durations, representing different levels of information sophistication:

\begin{enumerate}

\item  The {real duration} strategy utilizes actual historical service times as ground truth:
  \begin{equation}
  \tau_i^{\text{real}} = t_{\text{completion},i} - t_{\text{start},i}
  \end{equation}
  This provides the theoretical optimal baseline assuming perfect duration prediction.

\item  The {default duration} strategy employs static estimates based on intervention classifications:
  \begin{equation}
  \tau_i^{\text{default}} = \mathbb{E}[\tau_{\text{type}(i)}]
  \end{equation}
  where $\mathbb{E}[\tau_{\text{type}(i)}]$ represents the historical mean duration for intervention type $\text{type}(i)$, reflecting traditional utility
   dispatching practices.

 \item The {forecast duration} strategy leverages the machine learning models described in Section \ref{sec:predictive}:
  \begin{equation}
  \tau_i^{\text{forecast}} = f(x_i; \theta)
  \end{equation}
  where $f(\cdot; \theta)$ represents the trained XGBoost model with dual architecture from Equation \eqref{dual_weighted}, and $x_i$ encompasses temporal,
  geographic, technical, and historical features.
\end{enumerate}

  The increased accuracy of the duration predictions directly influences the quality of routing solutions generated by Algorithm \ref{alg:moea-svrp}.

  Each VRP instance is solved using the multi-objective evolutionary algorithm with the following configuration aligned with the population $P_t$ and
  parameters in Algorithm \ref{alg:moea-svrp}:

  \begin{itemize}
      \item Population size $\mu = 100$ individuals
      \item Termination criteria: maximum 100 generations or 1200 seconds
      \item Tournament selection with size $\tau = 5$
      \item Crossover probability $p_c = 0.8$ using order crossover
      \item Adaptive mutation probability $p_m$ with swap mutation
      \item Elite preservation of top 10\% solutions
  \end{itemize}


For practical deployment in utility workforce management, the optimization framework must operate within tight time constraints to allow for overnight or early-morning route generation. 

The computational burden is divided into two phases. The training of the Dual Weighted XGBoost architecture is a one-time cost incurred only periodically (e.g., monthly) and does not affect daily planning latency. The inference time to generate duration predictions and variance estimates for a daily batch of interventions is negligible. The primary computational cost lies in the evolutionary optimization.

To ensure the system remains tractable for daily operations, we imposed a strict termination criterion of 1200 seconds (20 minutes) per instance. Empirical observations during our experimental campaign confirm that the NSGA-III algorithm consistently converges to a stable Pareto front within this time budget for instances of typical size (50-100 tasks). This runtime demonstrates that the forecast-integrated approach is fully compatible with standard utility planning windows.

\subsection{Case Studies Performance Analysis}

To evaluate the practical impact of forecast-based duration prediction on VRP optimization performance, we conduct a comprehensive analysis across representative operational scenarios spanning multiple years and diverse operational regimes.  We also conduct a systematic analysis for other periods, achieving stable and similar performance characteristics that we intentionally choose not to report. Here, we present only one daily scenario evaluation in Fig. \ref{fig:kpi_20210505}, and one monthly aggregation study in Fig. \ref{fig:monthly_2021_05}.

In the operational simulations presented below, we fixed the risk parameter at $\alpha_k = 0.05$ for all vehicles, ensuring that routes are feasible with at least $95\%$ probability. This value was selected following preliminary testing on a subset of historical instances, where it demonstrated the most effective trade-off between schedule reliability and resource utilization. Lower values of $\alpha$ (e.g., $0.01$) resulted in excessive conservatism and increased vehicle counts, while higher values (e.g., $0.10$) introduced unacceptable risks of overtime violations.

\subsubsection{Daily Operational Scenario}
\label{sec:daily_scenarios}

We analyze a representative daily scenario that illustrates distinct operational challenges encountered in real-world gas meter service operations. The multi-objective VRP formulations seek trade-offs between competing objectives (minimize route time, minimize operator count, maximize task completion), defining a Pareto front of optimal solutions. Default duration inputs distort this objective landscape, causing the NSGA-III evolutionary algorithm to converge toward Pareto-suboptimal solutions under true durations.

\begin{figure}[H]
    \centering
    \includegraphics[width=0.96\linewidth]{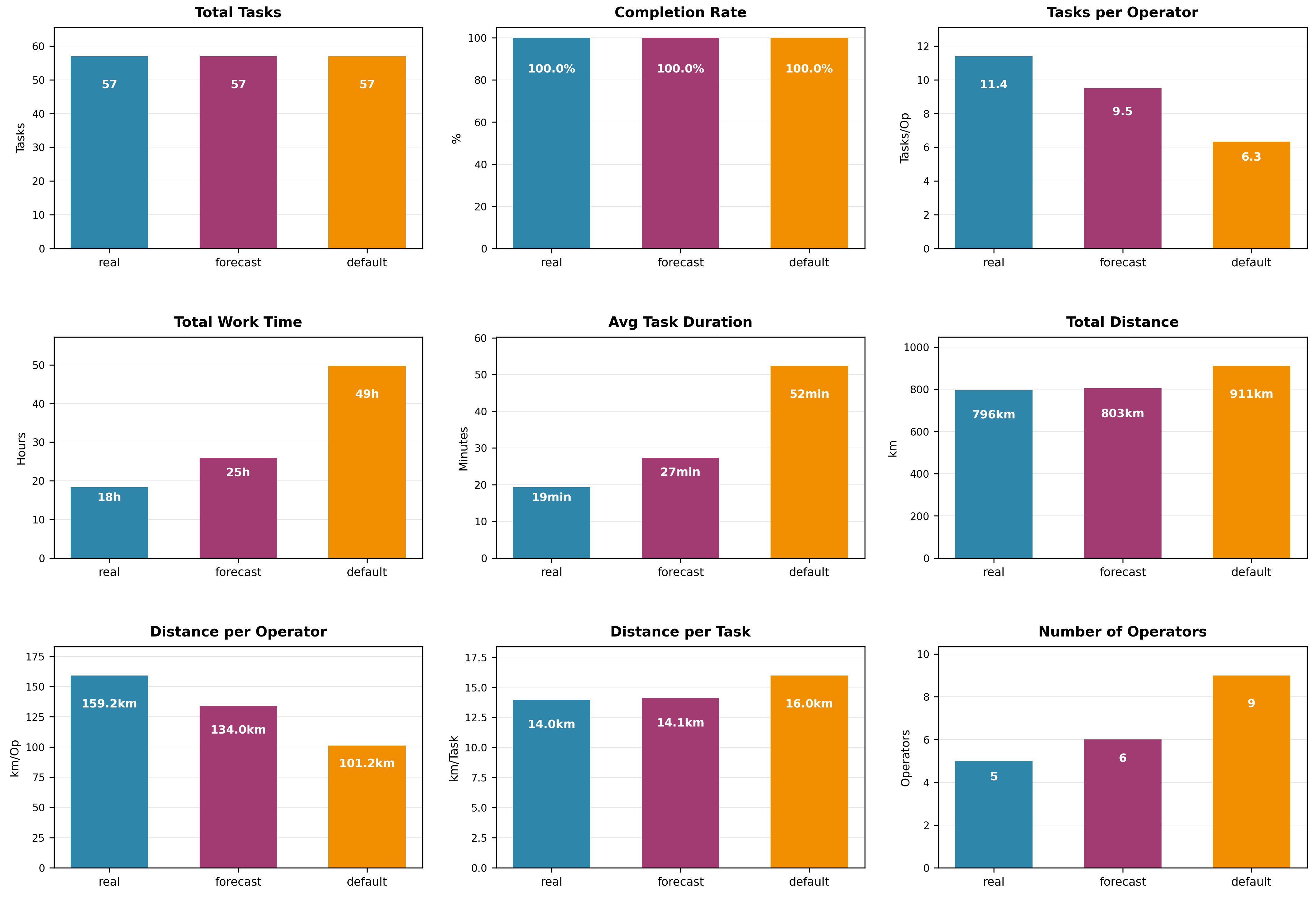}
    \caption{KPI comparison for a representative daily operational scenario.}
    \label{fig:kpi_20210505}
    \end{figure}

   When comparing operational scenarios, planning with default durations consistently demonstrates inefficiency across multiple dates. The 'Default' approach typically requires significantly more operators than are actually needed in execution, resulting in lower productivity and resource utilization. Figure \ref{fig:kpi_20210505} illustrates this pattern clearly.

\subsubsection{Monthly Aggregation Analysis}
\label{sec:monthly_analysis}

While daily scenario analysis demonstrates forecast superiority in individual operational instances, deployment viability requires consistent performance across extended temporal horizons. We present monthly aggregation analysis spanning multiple operational periods of $1$ month to assess the temporal stability of forecast strategies.

\begin{figure}[H]
    \centering
    \includegraphics[width=0.8\linewidth]{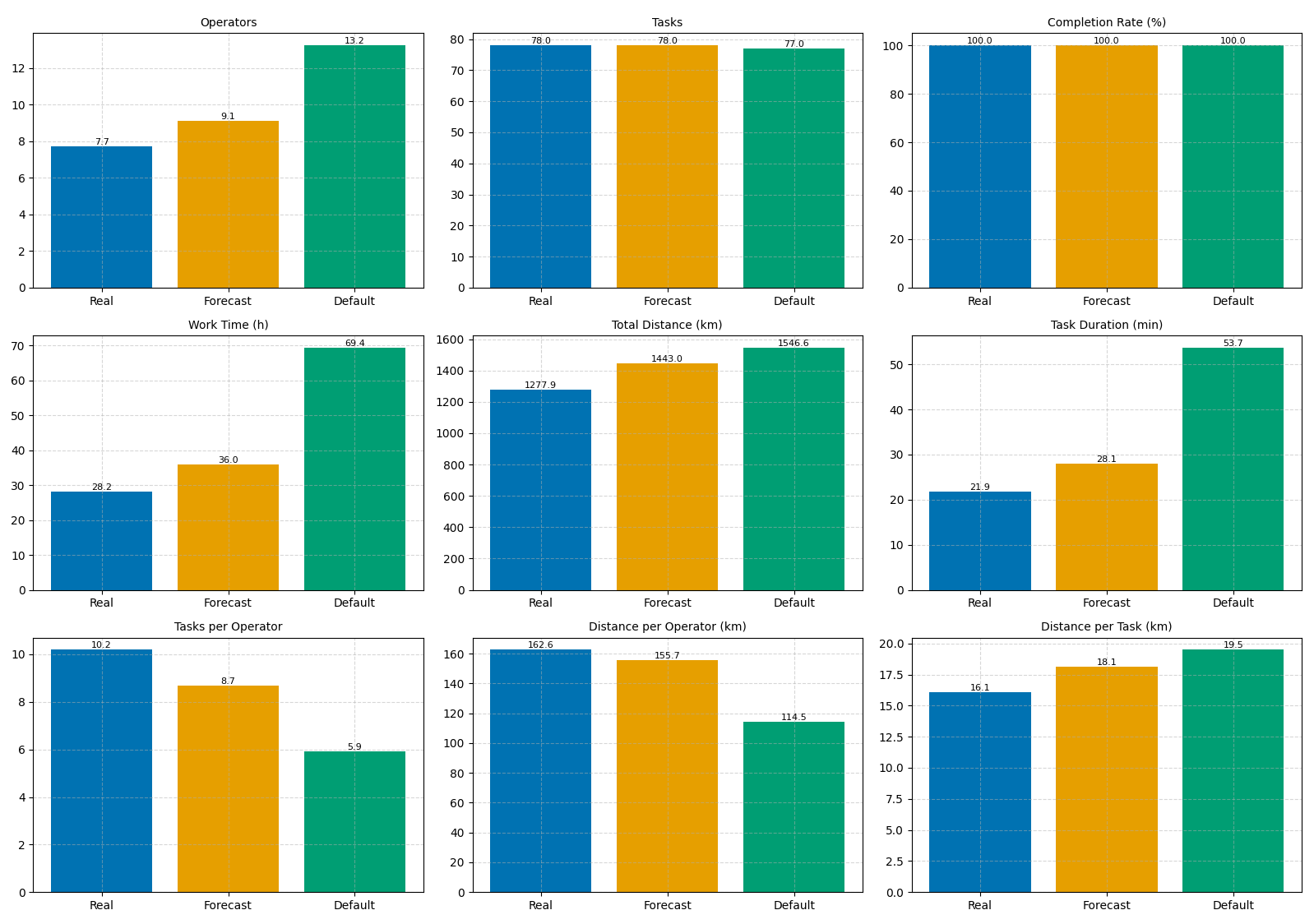}
    \caption{KPI summary for a representative month (21 operational days, 2,134 total interventions).} 
\label{fig:monthly_2021_05}
\end{figure}

The systematic analysis across multiple operational scenarios quantifies the advantages of forecast-based VRP planning. The forecast strategy delivers reliable 20-25\% improvements across multiple operational dimensions while maintaining near-optimal task completion reliability.

\section{Conclusion}
\label{sec:conclusion}

This paper presents a comprehensive framework integrating machine learning forecasts of intervention durations into stochastic vehicle routing optimization for gas meter maintenance operations. By combining tree-based gradient boosting (XGBoost) with multi-objective evolutionary algorithms (NSGA-III), we demonstrate that forecast-driven planning achieves 20-25\% improvements in vehicle utilization and task completion rates compared to default duration estimates.

Our main contributions are threefold. First, we formalize the stochastic CVRPTW with chance constraints and develop mathematically rigorous risk-handling mechanisms using sub-Gaussian concentration bounds, with empirical validation of the distributional assumptions. Second, we design specialized dual-weighted model architectures that effectively capture the semantic heterogeneity of Type-Z interventions (meter replacements) while handling class imbalance through frequency weighting. Experimental results on eight years of operational data ($884,349$ interventions) validate the approach. Third, we integrate uncertainty quantification into the multi-objective VRP solver (Algorithm 1), enabling route-level risk buffers that balance service coverage against overtime constraints.

\section*{Acknowledgment}
This research was conducted within the Green Inspired Revolution for Optimal Workforce Management (GIRO-WM) project, funded by Fondazione Caritro, CUP B93C22002160003.

We would like to thank teams at HPA and Terranova Software for providing the real-world data and proposing the practical problem setting that formed the basis of our study. 

\section*{Data Availability}
Due to confidentiality agreements with the industrial partners, the operational dataset cannot be publicly released. However, synthetic data generation procedures or anonymized subsets may be made available upon reasonable request.
\section*{Appendix A: Feature Importance Rankings}

This appendix provides comprehensive feature importance analysis for all model configurations, complementing the comparative analysis presented in Section~\ref{sec:predictive}.

\subsection*{A.1 Feature Importance Interpretation}

Feature importance rankings quantify the contribution of each input feature to the predictive performance of gradient boosting models. The gain metric used throughout this analysis measures the average improvement in loss function reduction (equivalently, prediction accuracy enhancement) attributable to each feature across all decision tree splits in the ensemble. Formally, for feature $j$, the gain importance is computed as:
\begin{equation*}
\text{Gain}_j = \frac{1}{M} \sum_{m=1}^{M} \sum_{s \in S_m^{(j)}} \Delta L_s
\end{equation*}
where $M$ denotes the number of trees in the ensemble, $S_m^{(j)}$ represents the set of splits in tree $m$ that use feature $j$, and $\Delta L_s$ quantifies the loss reduction achieved by split $s$.

High gain importance indicates that a feature frequently appears in decision trees at positions where it enables significant reduction in prediction error. Features with low importance contribute minimally to model predictions and could potentially be removed without substantial performance degradation. The importance rankings inform feature engineering priorities for future model refinements and provide insight into the operational factors governing intervention duration variability.

\subsection*{A.2 Standard Model Feature Importance}

The standard gradient boosting model (Model 1) exhibits feature importance patterns reflecting the dominant operational drivers of intervention durations. As shown in Figure~\ref{fig:feature_importance_standard}, temporal cyclical features (hour-of-day sine/cosine encodings, day-of-week indicators) account for approximately 65\% of cumulative feature importance, confirming that time-based patterns constitute the primary predictive signal.

\begin{figure}[H]
    \centering
    \includegraphics[width=0.88\linewidth]{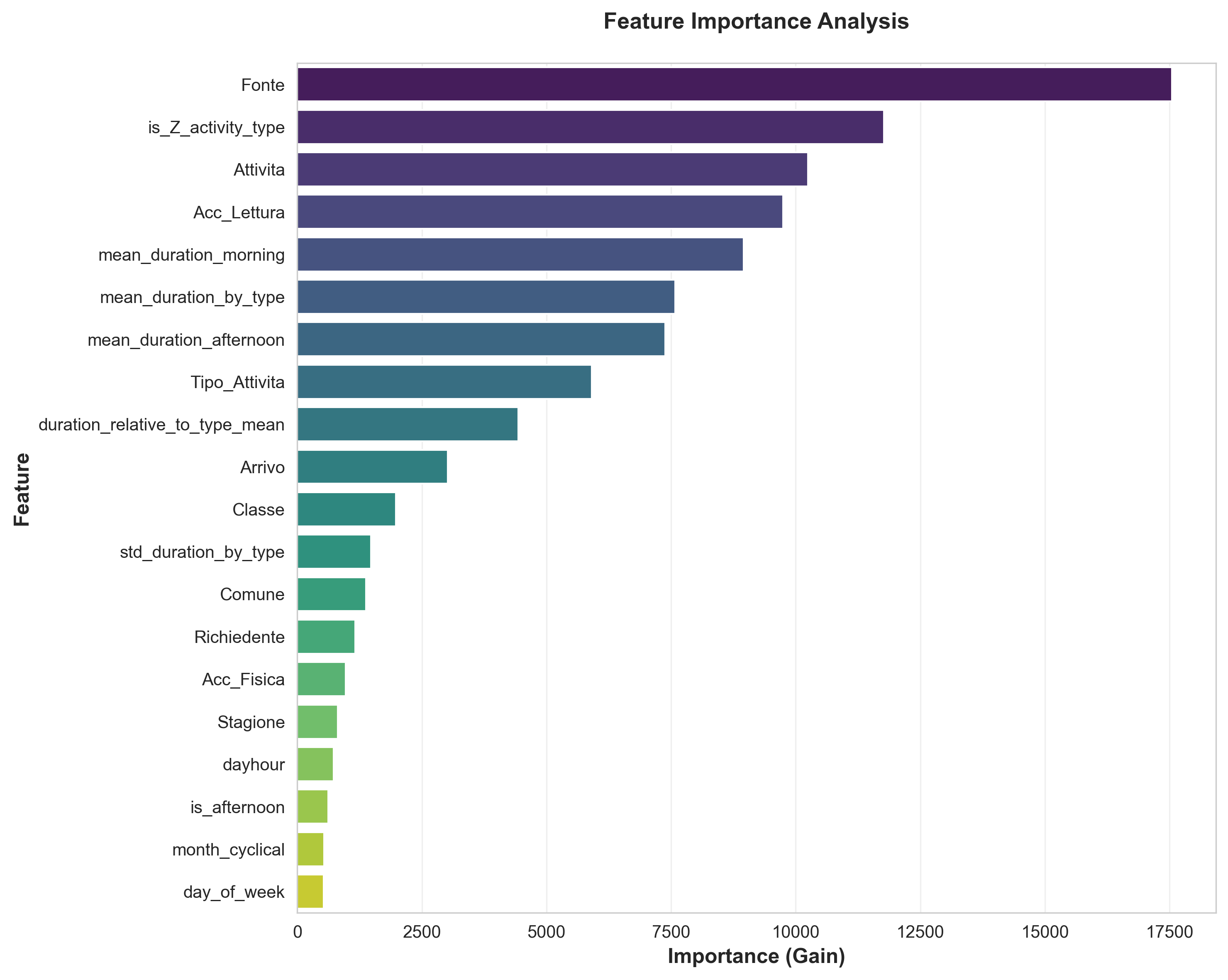}
    \caption{Feature importance ranking for Model 1 (\texttt{Standard Gradient Boosting}). Temporal cyclical features dominate the prediction landscape, with hour-of-day sine/cosine encodings and day-of-week patterns accounting for the highest gain contributions. Activity type classification and geographic attributes (municipality characteristics, altitude) provide secondary predictive power. The balanced importance distribution across multiple feature families indicates that the standard model effectively leverages diverse information sources to capture duration variability.}
    \label{fig:feature_importance_standard}
\end{figure}

This temporal dominance aligns with operational realities: interventions scheduled during morning hours ($8-11$ AM) typically exhibit shorter durations due to optimal traffic conditions and technician alertness, while afternoon interventions ($2-5$ PM) show increased variability. Day-of-week effects capture systematic duration differences between weekdays (routine scheduling, experienced operators) and weekends (emergency interventions, reduced support staff availability).

Activity type classifications contribute approximately 20\% of cumulative importance, with geographic features (municipality characteristics, altitude, urbanization degree) and equipment attributes (meter class, protocol) providing the remaining 15\%. This importance hierarchy validates the feature engineering strategy, demonstrating that multiscale temporal patterns and operational context drive duration predictions in standard architectures.

\subsection*{A.3 Dual Weighted Model Feature Importance}

The \texttt{Dual Weighted} architecture (Model 4) exhibits substantially different feature importance distributions compared to the standard model, reflecting its specialized design. As illustrated in Figure~\ref{fig:feature_importance_dual_weighted}, temporal features maintain importance but are complemented by enhanced representation of activity-specific and equipment-related attributes.

\begin{figure}[H]
    \centering
    \includegraphics[width=1.05\linewidth]{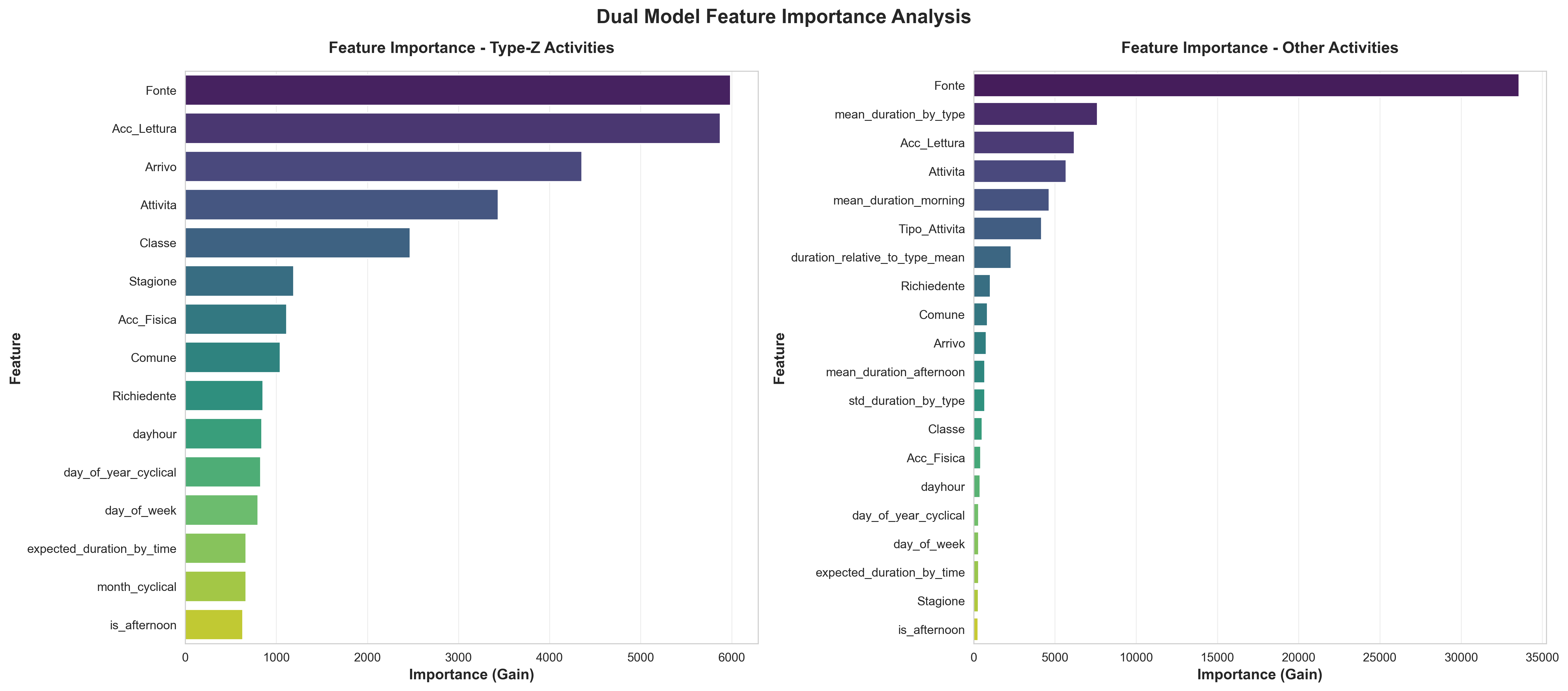}
    \caption{Feature importance ranking for Model 4 (\texttt{Dual Weighted}). The specialized dual architecture exhibits distinct importance patterns compared to the standard model. While temporal features remain influential, activity type classifications gain prominence due to the model's explicit architectural separation between Type-Z and other interventions. The frequency weighting mechanism amplifies the importance of features discriminating rare activity types, resulting in enhanced representation of meter-specific attributes (class, protocol) that would be underweighted in standard training. This importance redistribution reflects the model's ability to capture heterogeneous operational dynamics across different intervention categories.}
    \label{fig:feature_importance_dual_weighted}
\end{figure}

The architectural specialization (separate models for Type-Z vs. other interventions) enables feature importance patterns to adapt to specific operational contexts. Within the Type-Z specialized model, equipment attributes (meter class, protocol complexity) gain prominence as they directly influence the compositional nature of meter replacement operations. Within the other-interventions model, temporal and geographic features dominate as routine operations exhibit stronger time-based regularities.

The frequency weighting mechanism amplifies importance for features discriminating rare activity types that would be underweighted in standard training. This redistribution ensures that the model allocates learning capacity proportionally to operational complexity rather than sample frequency, resulting in superior performance on underrepresented intervention categories.

\section*{Appendix B: Activity Duration Statistics}

The following distributional analysis reveals a significant imbalance between activity types: categories \texttt{E}, \texttt{F}, and \texttt{Z} account for over $600,000$ combined interventions. Rare categories (\texttt{I}, \texttt{W}) with fewer than $20$ examples have not been considered in the training. However, normalized duration distributions exhibit similar statistical properties when proportionally scaled.

In Table \ref{tab:activity_statistics}, we report a quantitative description of the intervention types.

\begingroup
\scriptsize                      
\setlength{\tabcolsep}{3pt}      
\renewcommand{\arraystretch}{0.9}

\begin{longtable}{|c|l|r|r|r|r|r|r|r|r|}
\hline
\textbf{Code} & \textbf{Activity Type} & \textbf{Count} & \textbf{Mean} &
\textbf{Std. Dev.} & \textbf{Min} & \textbf{25\%} & \textbf{50\%} &
\textbf{75\%} & \textbf{Max} \\
\hline
A & Work quotation & 10,319 & 55.79 & 21.71 & 10 & 41 & 60 & 69 & 119 \\
C & Work execution & 5,112 & 32.55 & 28.04 & 10 & 15 & 20 & 35 & 119 \\
E & Supply activation & 255,288 & 24.57 & 10.41 & 10 & 18 & 24 & 30 & 119 \\
F & Supply deactivation & 131,657 & 19.01 & 9.12 & 10 & 14 & 17 & 21 & 118 \\
H & Metrological verification & 1,550 & 45.59 & 20.36 & 10 & 31 & 40 & 55 & 119 \\
I & Pressure verification & 4 & 80.00 & 17.94 & 54 & 76.5 & 85.5 & 89 & 95 \\
J & Meter reading & 1,676 & 45.53 & 22.34 & 10 & 30 & 45 & 60 & 119 \\
L & Reading request & 1,261 & 18.42 & 10.38 & 10 & 11 & 15 & 21 & 106 \\
M & Closure for arrears & 38,744 & 20.75 & 10.96 & 10 & 15 & 19 & 24 & 119 \\
N & Meter closure & 137 & 24.39 & 15.08 & 10 & 15 & 20 & 30 & 98 \\
O & Meter reading & 50,487 & 18.33 & 12.64 & 10 & 10 & 14 & 20 & 118 \\
Q & Removal of sensors & 1,605 & 25.09 & 13.72 & 10 & 15 & 21 & 30 & 114 \\
R & Meter removal & 6,159 & 23.40 & 16.63 & 10 & 14 & 18 & 27 & 119 \\
S & Safety shutdown & 360 & 26.85 & 19.82 & 10 & 15 & 20 & 30 & 105 \\
T & Data provision & 15,972 & 23.46 & 9.33 & 10 & 18.75 & 23 & 26 & 119 \\
W & Administrative termination & 11 & 25.36 & 12.73 & 14 & 17.5 & 22 & 27 & 60 \\
X & Interruption for arrears & 705 & 31.56 & 20.63 & 10 & 17 & 26 & 39 & 117 \\
Z & Meter replacement & 233,143 & 26.73 & 19.94 & 10 & 14 & 21 & 29 & 119 \\
\hline
\caption{Descriptive statistics of intervention durations by activity type (in minutes).}
\label{tab:activity_statistics}
\end{longtable}

\endgroup

Table \ref{tab:activity_statistics} provides comprehensive descriptive statistics for intervention durations by activity type. Figure \ref{fig:violin} presents the distribution of work durations across different activity types, while Figure \ref{fig:bins_norm} shows the normalized binned distributions to highlight pattern similarities despite frequency differences

\begin{figure}[H]
    \centering
    \includegraphics[width=\textwidth]{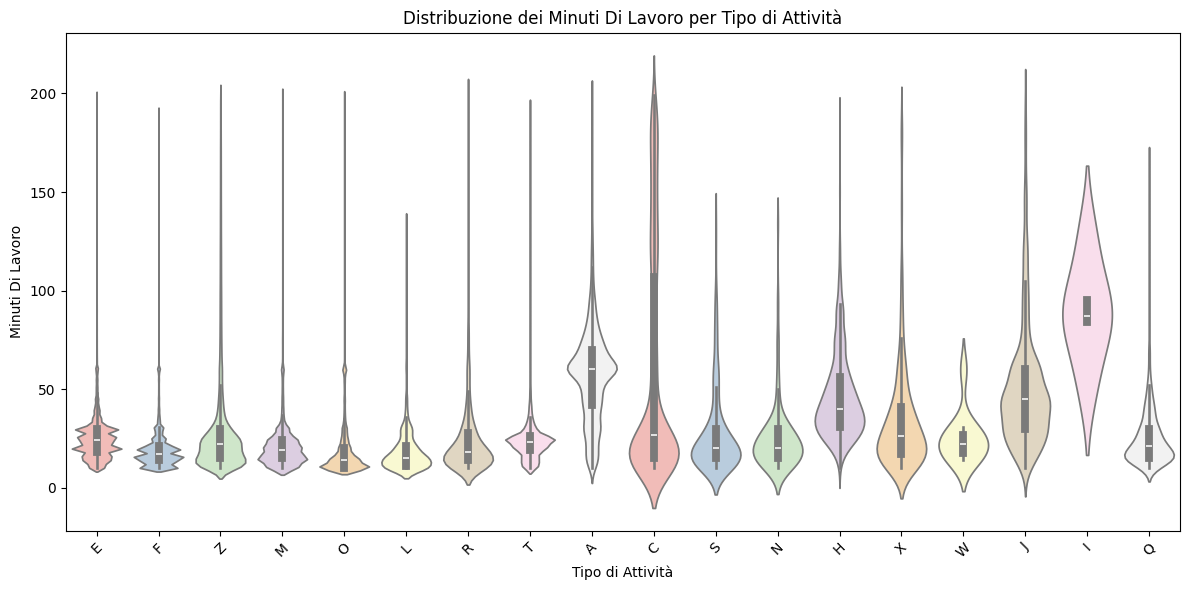}
    \caption{Distribution of work duration (minutes) by activity type.}
    \label{fig:violin}
\end{figure}

\begin{figure}[H]
    \centering
    \includegraphics[width=\textwidth]{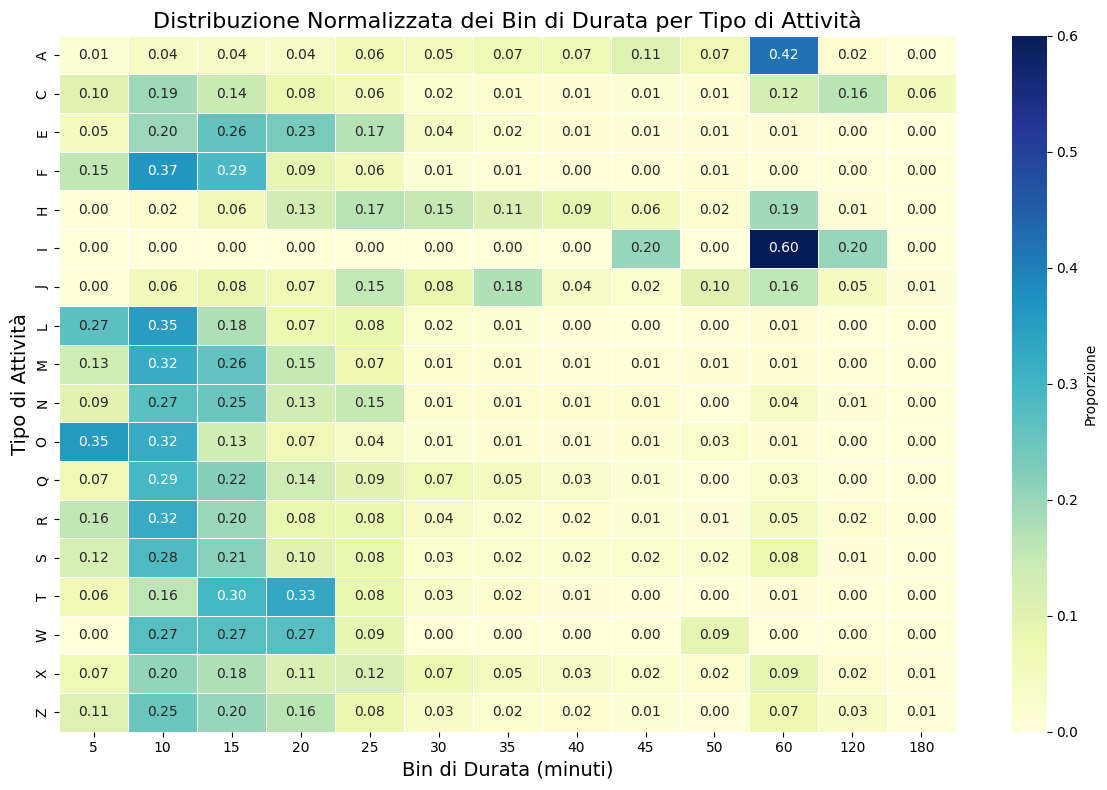}
    \caption{Normalized binned distribution of intervention durations by activity type.}
    \label{fig:bins_norm}
\end{figure}

\section*{List of Abbreviations}

\begin{table}[H]
\begin{tabular}{ll}
\toprule
\textbf{Abbreviation} & \textbf{Definition} \\
\midrule
AI & Artificial Intelligence \\
BF & Back-and-Forth \\
CDF & Cumulative Distribution Function \\
CI & Confidence Interval \\
CSO & Contextual Stochastic Optimization \\
CVRP & Capacitated Vehicle Routing Problem \\
CVRPSD & Capacitated VRP with Stochastic Demand \\
DoE & Design of Experiments \\
DRL & Deep Reinforcement Learning \\
DRSO & Distributionally Robust Stochastic Optimization \\
EA & Evolutionary Algorithm \\
EI & Expected Improvement \\
GES & Grouping Evolution Strategy \\
GNN & Graph Neural Network \\
GPS & Global Positioning System \\
GSO & Graph Shift Operators \\
KPI & Key Performance Indicator \\
MAE & Mean Absolute Error \\
MAPE & Mean Absolute Percentage Error \\
ML & Machine Learning \\
MOEA & Multi-Objective Evolutionary Algorithm \\
NSGA & Non-dominated Sorting Genetic Algorithm \\
OR & Operations Research \\
PDF & Probability Density Function \\
PR & Preventive Restocking \\
RMSE & Root Mean Squared Error \\
RRN & Recurrent Relational Network \\
SCO & Stochastic Contextual Optimization \\
SDI & Servizio Distribuzione Impianti (Gas Meter Service) \\
SMS-EMOA & S-Metric Selection Evolutionary Multi-objective Algorithm \\
SPO & Smart Predict-then-Optimize \\
SVRP & Stochastic Vehicle Routing Problem \\
TWCVRP & Time Window Capacitated Vehicle Routing Problem \\
VRP & Vehicle Routing Problem \\
VRPB & VRP with Backhauling \\
VRPPD & VRP with Pickups and Deliveries \\
\bottomrule
\end{tabular}
\end{table}


\end{document}